\newtheorem{theorem}{Theorem}
\newtheorem{axiom}[theorem]{Axiom}
\newtheorem{conjecture}[theorem]{Conjecture}
\newtheorem{corollary}[theorem]{Corollary}
\newtheorem{definition}[theorem]{Definition}
\newtheorem{example}[theorem]{Example}
\newtheorem{exercise}[theorem]{Exercise}
\newtheorem{lemma}[theorem]{Lemma}
\newtheorem{proposition}[theorem]{Proposition}
\newtheorem{remark}[theorem]{Remark}
\newenvironment{proof}[1][Proof]{\noindent\textbf{#1.} }{\ \rule{0.5em}{0.5em}}
\let\pdfoutput=\undefined\fi
\chardef\@x10\chardef\@xv60
\def\tcitime{
\def\@time{%
  \@minute\time\@hour\@minute\divide\@hour\@xv
  \ifnum\@hour<\@x 0\fi\the\@hour:%
  \multiply\@hour\@xv\advance\@minute-\@hour
  \ifnum\@minute<\@x 0\fi\the\@minute
  }}%
\def\x@hyperref#1#2#3{%
   \catcode`\~ = 12
   \catcode`\$ = 12
   \catcode`\_ = 12
   \catcode`\# = 12
   \catcode`\& = 12
   \y@hyperref{#1}{#2}{#3}%
}
\def\y@hyperref#1#2#3#4{%
   #2\ref{#4}#3
   \catcode`\~ = 13
   \catcode`\$ = 3
   \catcode`\_ = 8
   \catcode`\# = 6
   \catcode`\& = 4
}
\def\QCTOpt[#1]#2{%
  \def\QCTOptB{#1}
  \def\QCTOptA{#2}
}
\def\QCTNOpt#1{%
  \def\QCTOptA{#1}
  \let\QCTOptB\empty
}
\def\Qct{%
  \@ifnextchar[{%
    \QCTOpt}{\QCTNOpt}
}
\def\QCBOpt[#1]#2{%
  \def\QCBOptB{#1}%
  \def\QCBOptA{#2}%
}
\def\QCBNOpt#1{%
  \def\QCBOptA{#1}%
  \let\QCBOptB\empty
}
\def\Qcb{%
  \@ifnextchar[{%
    \QCBOpt}{\QCBNOpt}%
}
\def\PrepCapArgs{%
  \ifx\QCBOptA\empty
    \ifx\QCTOptA\empty
      {}%
    \else
      \ifx\QCTOptB\empty
        {\QCTOptA}%
      \else
        [\QCTOptB]{\QCTOptA}%
      \fi
    \fi
  \else
    \ifx\QCBOptA\empty
      {}%
    \else
      \ifx\QCBOptB\empty
        {\QCBOptA}%
      \else
        [\QCBOptB]{\QCBOptA}%
      \fi
    \fi
  \fi
}
\def\GRAPHICSPS#1{%
 \ifcase\GRAPHICSTYPE
   \special{ps: #1}%
 \or
   \special{language "PS", include "#1"}%
 \fi
}%
\def\graffile#1#2#3#4{%
    \bgroup
	   \@inlabelfalse
       \leavevmode
       \@ifundefined{bbl@deactivate}{\def~{\string~}}{\activesoff}%
        \raise -#4 \BOXTHEFRAME{%
           \hbox to #2{\raise #3\hbox to #2{\null #1\hfil}}}%
    \egroup
}%
\def\draftbox#1#2#3#4{%
 \leavevmode\raise -#4 \hbox{%
  \frame{\rlap{\protect\tiny #1}\hbox to #2%
   {\vrule height#3 width\z@ depth\z@\hfil}%
  }%
 }%
}%
\let\nographics=\@msidraft
\newif\ifwasdraft
\def\GRAPHIC#1#2#3#4#5{%
   \ifnum\@msidraft=\@ne\draftbox{#2}{#3}{#4}{#5}%
   \else\graffile{#1}{#3}{#4}{#5}%
   \fi
}
\def\addtoLaTeXparams#1{%
    \edef\LaTeXparams{\LaTeXparams #1}}%
\newif\ifBoxFrame \BoxFramefalse
\newif\ifOverFrame \OverFramefalse
\newif\ifUnderFrame \UnderFramefalse
\def\BOXTHEFRAME#1{%
   \hbox{%
      \ifBoxFrame
         \frame{#1}%
      \else
         {#1}%
      \fi
   }%
}
\def\doFRAMEparams#1{\BoxFramefalse\OverFramefalse\UnderFramefalse\readFRAMEparams#1\end}%
\def\readFRAMEparams#1{%
 \ifx#1\end%
  \let\next=\relax
  \else
  \ifx#1i\dispkind=\z@\fi
  \ifx#1d\dispkind=\@ne\fi
  \ifx#1f\dispkind=\tw@\fi
  \ifx#1t\addtoLaTeXparams{t}\fi
  \ifx#1b\addtoLaTeXparams{b}\fi
  \ifx#1p\addtoLaTeXparams{p}\fi
  \ifx#1h\addtoLaTeXparams{h}\fi
  \ifx#1X\BoxFrametrue\fi
  \ifx#1O\OverFrametrue\fi
  \ifx#1U\UnderFrametrue\fi
  \ifx#1w
    \ifnum\@msidraft=1\wasdrafttrue\else\wasdraftfalse\fi
    \@msidraft=\@ne
  \fi
  \let\next=\readFRAMEparams
  \fi
 \next
 }%
\def\IFRAME#1#2#3#4#5#6{%
      \bgroup
      \let\QCTOptA\empty
      \let\QCTOptB\empty
      \let\QCBOptA\empty
      \let\QCBOptB\empty
      #6%
      \parindent=0pt
      \leftskip=0pt
      \rightskip=0pt
      \setbox0=\hbox{\QCBOptA}%
      \@tempdima=#1\relax
      \ifOverFrame
          \typeout{This is not implemented yet}%
          \show\HELP
      \else
         \ifdim\wd0>\@tempdima
            \advance\@tempdima by \@tempdima
            \ifdim\wd0 >\@tempdima
               \setbox1 =\vbox{%
                  \unskip\hbox to \@tempdima{\hfill\GRAPHIC{#5}{#4}{#1}{#2}{#3}\hfill}%
                  \unskip\hbox to \@tempdima{\parbox[b]{\@tempdima}{\QCBOptA}}%
               }%
               \wd1=\@tempdima
            \else
               \textwidth=\wd0
               \setbox1 =\vbox{%
                 \noindent\hbox to \wd0{\hfill\GRAPHIC{#5}{#4}{#1}{#2}{#3}\hfill}\\%
                 \noindent\hbox{\QCBOptA}%
               }%
               \wd1=\wd0
            \fi
         \else
            \ifdim\wd0>0pt
              \hsize=\@tempdima
              \setbox1=\vbox{%
                \unskip\GRAPHIC{#5}{#4}{#1}{#2}{0pt}%
                \break
                \unskip\hbox to \@tempdima{\hfill \QCBOptA\hfill}%
              }%
              \wd1=\@tempdima
           \else
              \hsize=\@tempdima
              \setbox1=\vbox{%
                \unskip\GRAPHIC{#5}{#4}{#1}{#2}{0pt}%
              }%
              \wd1=\@tempdima
           \fi
         \fi
         \@tempdimb=\ht1
         \advance\@tempdimb by -#2
         \advance\@tempdimb by #3
         \leavevmode
         \raise -\@tempdimb \hbox{\box1}%
      \fi
      \egroup%
}%
\def\DFRAME#1#2#3#4#5{%
  \vspace\topsep
  \hfil\break
  \bgroup
     \leftskip\@flushglue
	 \rightskip\@flushglue
	 \parindent\z@
	 \parfillskip\z@skip
     \let\QCTOptA\empty
     \let\QCTOptB\empty
     \let\QCBOptA\empty
     \let\QCBOptB\empty
	 \vbox\bgroup
        \ifOverFrame 
           #5\QCTOptA\par
        \fi
        \GRAPHIC{#4}{#3}{#1}{#2}{\z@}%
        \ifUnderFrame 
           \break#5\QCBOptA
        \fi
	 \egroup
  \egroup
  \vspace\topsep
  \break
}%
\def\FFRAME#1#2#3#4#5#6#7{%
  \@ifundefined{floatstyle}
    {
     \begin{figure}[#1]%
    }
    {
	 \ifx#1h
      \begin{figure}[H]%
	 \else
      \begin{figure}[#1]%
	 \fi
	}
  \let\QCTOptA\empty
  \let\QCTOptB\empty
  \let\QCBOptA\empty
  \let\QCBOptB\empty
  \ifOverFrame
    #4
    \ifx\QCTOptA\empty
    \else
      \ifx\QCTOptB\empty
        \caption{\QCTOptA}%
      \else
        \caption[\QCTOptB]{\QCTOptA}%
      \fi
    \fi
    \ifUnderFrame\else
      \label{#5}%
    \fi
  \else
    \UnderFrametrue%
  \fi
  \begin{center}\GRAPHIC{#7}{#6}{#2}{#3}{\z@}\end{center}%
  \ifUnderFrame
    #4
    \ifx\QCBOptA\empty
      \caption{}%
    \else
      \ifx\QCBOptB\empty
        \caption{\QCBOptA}%
      \else
        \caption[\QCBOptB]{\QCBOptA}%
      \fi
    \fi
    \label{#5}%
  \fi
  \end{figure}%
 }%
\def\makeactives{
  \catcode`\"=\active
  \catcode`\;=\active
  \catcode`\:=\active
  \catcode`\'=\active
  \catcode`\~=\active
}
   \gdef\activesoff{%
      \def"{\string"}%
      \def;{\string;}%
      \def:{\string:}%
      \def'{\string'}%
      \def~{\string~}%
    }
\def\FRAME#1#2#3#4#5#6#7#8{%
 \bgroup
 \ifnum\@msidraft=\@ne
   \wasdrafttrue
 \else
   \wasdraftfalse%
 \fi
 \def\LaTeXparams{}%
 \dispkind=\z@
 \def\LaTeXparams{}%
 \doFRAMEparams{#1}%
 \ifnum\dispkind=\z@\IFRAME{#2}{#3}{#4}{#7}{#8}{#5}\else
  \ifnum\dispkind=\@ne\DFRAME{#2}{#3}{#7}{#8}{#5}\else
   \ifnum\dispkind=\tw@
    \edef\@tempa{\noexpand\FFRAME{\LaTeXparams}}%
    \@tempa{#2}{#3}{#5}{#6}{#7}{#8}%
    \fi
   \fi
  \fi
  \ifwasdraft\@msidraft=1\else\@msidraft=0\fi{}%
  \egroup
 }%
\def\TEXUX#1{"texux"}
\def\func#1{\mathop{\rm #1}\nolimits}%
\long\def\QQQ#1#2{%
     \long\expandafter\def\csname#1\endcsname{#2}}%
\long\def\QQA#1#2{}%
\def\QTR#1#2{{\csname#1\endcsname {#2}}}%
\def\EXPAND#1[#2]#3{}%
\def\NOEXPAND#1[#2]#3{}%
\def\LaTeXparent#1{}%
\def\ChildStyles#1{}%
\def\ChildDefaults#1{}%
\def\QTagDef#1#2#3{}%
  \providecommand{\UNICODE}[2][]{\protect\rule{.1in}{.1in}}
  \providecommand{\U}[1]{\protect\rule{.1in}{.1in}}
\def\QQfnmark#1{\footnotemark}
 \def\abstract{%
  \if@twocolumn
   \section*{Abstract (Not appropriate in this style!)}%
   \else \small 
   \begin{center}{\bf Abstract\vspace{-.5em}\vspace{\z@}}\end{center}%
   \quotation 
   \fi
  }%
   \def\registered{\relax\ifmmode{}\r@gistered
                    \else$\m@th\r@gistered$\fi}%
 \def\r@gistered{^{\ooalign
  {\hfil\raise.07ex\hbox{$\scriptstyle\rm\text{R}$}\hfil\crcr
  \mathhexbox20D}}}}{}%
\newdimen\theight
\def\newfmtname{LaTeX2e}
  \DeclareOldFontCommand{\rm}{\normalfont\rmfamily}{\mathrm}
  \DeclareOldFontCommand{\sf}{\normalfont\sffamily}{\mathsf}
  \DeclareOldFontCommand{\tt}{\normalfont\ttfamily}{\mathtt}
  \DeclareOldFontCommand{\bf}{\normalfont\bfseries}{\mathbf}
  \DeclareOldFontCommand{\it}{\normalfont\itshape}{\mathit}
  \DeclareOldFontCommand{\sl}{\normalfont\slshape}{\@nomath\sl}
  \DeclareOldFontCommand{\sc}{\normalfont\scshape}{\@nomath\sc}
\def\alpha{{\Greekmath 010B}}%
\def\beta{{\Greekmath 010C}}%
\def\gamma{{\Greekmath 010D}}%
\def\delta{{\Greekmath 010E}}%
\def\epsilon{{\Greekmath 010F}}%
\def\zeta{{\Greekmath 0110}}%
\def\eta{{\Greekmath 0111}}%
\def\theta{{\Greekmath 0112}}%
\def\iota{{\Greekmath 0113}}%
\def\kappa{{\Greekmath 0114}}%
\def\lambda{{\Greekmath 0115}}%
\def\mu{{\Greekmath 0116}}%
\def\nu{{\Greekmath 0117}}%
\def\xi{{\Greekmath 0118}}%
\def\pi{{\Greekmath 0119}}%
\def\rho{{\Greekmath 011A}}%
\def\sigma{{\Greekmath 011B}}%
\def\tau{{\Greekmath 011C}}%
\def\upsilon{{\Greekmath 011D}}%
\def\phi{{\Greekmath 011E}}%
\def\chi{{\Greekmath 011F}}%
\def\psi{{\Greekmath 0120}}%
\def\omega{{\Greekmath 0121}}%
\def\varepsilon{{\Greekmath 0122}}%
\def\vartheta{{\Greekmath 0123}}%
\def\varpi{{\Greekmath 0124}}%
\def\varrho{{\Greekmath 0125}}%
\def\varsigma{{\Greekmath 0126}}%
\def\varphi{{\Greekmath 0127}}%
\def\nabla{{\Greekmath 0272}}
\def\FindBoldGroup{%
   {\setbox0=\hbox{$\mathbf{x\global\edef\theboldgroup{\the\mathgroup}}$}}%
}
\def\Greekmath#1#2#3#4{%
    \if@compatibility
        \ifnum\mathgroup=\symbold
           \mathchoice{\mbox{\boldmath$\displaystyle\mathchar"#1#2#3#4$}}%
                      {\mbox{\boldmath$\textstyle\mathchar"#1#2#3#4$}}%
                      {\mbox{\boldmath$\scriptstyle\mathchar"#1#2#3#4$}}%
                      {\mbox{\boldmath$\scriptscriptstyle\mathchar"#1#2#3#4$}}%
        \else
           \mathchar"#1#2#3#4%
        \fi 
    \else 
        \FindBoldGroup
        \ifnum\mathgroup=\theboldgroup 
           \mathchoice{\mbox{\boldmath$\displaystyle\mathchar"#1#2#3#4$}}%
                      {\mbox{\boldmath$\textstyle\mathchar"#1#2#3#4$}}%
                      {\mbox{\boldmath$\scriptstyle\mathchar"#1#2#3#4$}}%
                      {\mbox{\boldmath$\scriptscriptstyle\mathchar"#1#2#3#4$}}%
        \else
           \mathchar"#1#2#3#4%
        \fi     	    
	  \fi}
\newif\ifGreekBold  \GreekBoldfalse
\let\SAVEPBF=\pbf
\def\pbf{\GreekBoldtrue\SAVEPBF}%
  \newcounter{equationnumber}  
  \def\mathletters{%
     \addtocounter{equation}{1}
     \edef\@currentlabel{\theequation}%
     \setcounter{equationnumber}{\c@equation}
     \setcounter{equation}{0}%
     \edef\theequation{\@currentlabel\noexpand\alph{equation}}%
  }
    \def\BibTeX{{\rm B\kern-.05em{\sc i\kern-.025em b}\kern-.08em
                 T\kern-.1667em\lower.7ex\hbox{E}\kern-.125emX}}}{}%
\def\AmS{{\protect\usefont{OMS}{cmsy}{m}{n}%
                A\kern-.1667em\lower.5ex\hbox{M}\kern-.125emS}}}{}%
\def\@@eqncr{\let\@tempa\relax
    \ifcase\@eqcnt \def\@tempa{& & &}\or \def\@tempa{& &}%
      \else \def\@tempa{&}\fi
     \@tempa
     \if@eqnsw
        \iftag@
           \@taggnum
        \else
           \@eqnnum\stepcounter{equation}%
        \fi
     \fi
     \global\tag@false
     \global\@eqnswtrue
     \global\@eqcnt\z@\cr}
\def\TCItag{\@ifnextchar*{\@TCItagstar}{\@TCItag}}
\def\@TCItag#1{%
    \global\tag@true
    \global\def\@taggnum{(#1)}}
\def\@TCItagstar*#1{%
    \global\tag@true
    \global\def\@taggnum{#1}}
\def\ExitTCILatex{\makeatother }
\if@compatibility\message{amsmath already loaded}\fi\aftergroup\ExitTCILatex}
\if@compatibility\message{amstex already loaded}\fi\aftergroup\ExitTCILatex}
\if@compatibility\message{amsgen already loaded}\fi\aftergroup\ExitTCILatex}
\let\DOTSI\relax
\def\RIfM@{\relax\ifmmode}%
\def\FN@{\futurelet\next}%
\def\iint{\DOTSI\intno@\tw@\FN@\ints@}%
\def\iiint{\DOTSI\intno@\thr@@\FN@\ints@}%
\def\iiiint{\DOTSI\intno@4 \FN@\ints@}%
\def\idotsint{\DOTSI\intno@\z@\FN@\ints@}%
\def\ints@{\findlimits@\ints@@}%
\newif\iflimtoken@
\newif\iflimits@
\def\findlimits@{\limtoken@true\ifx\next\limits\limits@true
 \else\ifx\next\nolimits\limits@false\else
 \limtoken@false\ifx\ilimits@\nolimits\limits@false\else
 \ifinner\limits@false\else\limits@true\fi\fi\fi\fi}%
\def\multint@{\int\ifnum\intno@=\z@\intdots@                          
 \else\intkern@\fi                                                    
 \ifnum\intno@>\tw@\int\intkern@\fi                                   
 \ifnum\intno@>\thr@@\int\intkern@\fi                                 
 \int}
\def\multintlimits@{\intop\ifnum\intno@=\z@\intdots@\else\intkern@\fi
 \ifnum\intno@>\tw@\intop\intkern@\fi
 \ifnum\intno@>\thr@@\intop\intkern@\fi\intop}%
\def\intic@{%
    \mathchoice{\hskip.5em}{\hskip.4em}{\hskip.4em}{\hskip.4em}}%
\def\negintic@{\mathchoice
 {\hskip-.5em}{\hskip-.4em}{\hskip-.4em}{\hskip-.4em}}%
\def\ints@@{\iflimtoken@                                              
 \def\ints@@@{\iflimits@\negintic@
   \mathop{\intic@\multintlimits@}\limits                             
  \else\multint@\nolimits\fi                                          
  \eat@}
 \else                                                                
 \def\ints@@@{\iflimits@\negintic@
  \mathop{\intic@\multintlimits@}\limits\else
  \multint@\nolimits\fi}\fi\ints@@@}%
\def\intkern@{\mathchoice{\!\!\!}{\!\!}{\!\!}{\!\!}}%
\def\plaincdots@{\mathinner{\cdotp\cdotp\cdotp}}%
\def\intdots@{\mathchoice{\plaincdots@}%
 {{\cdotp}\mkern1.5mu{\cdotp}\mkern1.5mu{\cdotp}}%
 {{\cdotp}\mkern1mu{\cdotp}\mkern1mu{\cdotp}}%
 {{\cdotp}\mkern1mu{\cdotp}\mkern1mu{\cdotp}}}%
\def\RIfM@{\relax\protect\ifmmode}
\def\text{\RIfM@\expandafter\text@\else\expandafter\mbox\fi}
\let\nfss@text\text
\def\text@#1{\mathchoice
   {\textdef@\displaystyle\f@size{#1}}%
   {\textdef@\textstyle\tf@size{\firstchoice@false #1}}%
   {\textdef@\textstyle\sf@size{\firstchoice@false #1}}%
   {\textdef@\textstyle \ssf@size{\firstchoice@false #1}}%
   \glb@settings}
\def\textdef@#1#2#3{\hbox{{%
                    \everymath{#1}%
                    \let\f@size#2\selectfont
                    #3}}}
\newif\iffirstchoice@
\def\Let@{\relax\iffalse{\fi\let\\=\cr\iffalse}\fi}%
\def\vspace@{\def\vspace##1{\crcr\noalign{\vskip##1\relax}}}%
\def\multilimits@{\bgroup\vspace@\Let@
 \baselineskip\fontdimen10 \scriptfont\tw@
 \advance\baselineskip\fontdimen12 \scriptfont\tw@
 \lineskip\thr@@\fontdimen8 \scriptfont\thr@@
 \lineskiplimit\lineskip
 \vbox\bgroup\ialign\bgroup\hfil$\m@th\scriptstyle{##}$\hfil\crcr}%
\def\Sb{_\multilimits@}%
\def\endSb{\crcr\egroup\egroup\egroup}%
\def\Sp{^\multilimits@}%
\newdimen\ex@
\def\rightarrowfill@#1{$#1\m@th\mathord-\mkern-6mu\cleaders
 \hbox{$#1\mkern-2mu\mathord-\mkern-2mu$}\hfill
 \mkern-6mu\mathord\rightarrow$}%
\def\leftarrowfill@#1{$#1\m@th\mathord\leftarrow\mkern-6mu\cleaders
 \hbox{$#1\mkern-2mu\mathord-\mkern-2mu$}\hfill\mkern-6mu\mathord-$}%
\def\leftrightarrowfill@#1{$#1\m@th\mathord\leftarrow
\mkern-6mu\cleaders
 \hbox{$#1\mkern-2mu\mathord-\mkern-2mu$}\hfill
 \mkern-6mu\mathord\rightarrow$}%
\def\overrightarrow{\mathpalette\overrightarrow@}%
\def\overrightarrow@#1#2{\vbox{\ialign{##\crcr\rightarrowfill@#1\crcr
 \noalign{\kern-\ex@\nointerlineskip}$\m@th\hfil#1#2\hfil$\crcr}}}%
\def\overleftarrow{\mathpalette\overleftarrow@}%
\def\overleftarrow@#1#2{\vbox{\ialign{##\crcr\leftarrowfill@#1\crcr
 \noalign{\kern-\ex@\nointerlineskip}$\m@th\hfil#1#2\hfil$\crcr}}}%
\def\overleftrightarrow{\mathpalette\overleftrightarrow@}%
\def\overleftrightarrow@#1#2{\vbox{\ialign{##\crcr
   \leftrightarrowfill@#1\crcr
 \noalign{\kern-\ex@\nointerlineskip}$\m@th\hfil#1#2\hfil$\crcr}}}%
\def\underrightarrow{\mathpalette\underrightarrow@}%
\def\underrightarrow@#1#2{\vtop{\ialign{##\crcr$\m@th\hfil#1#2\hfil
  $\crcr\noalign{\nointerlineskip}\rightarrowfill@#1\crcr}}}%
\def\underleftarrow{\mathpalette\underleftarrow@}%
\def\underleftarrow@#1#2{\vtop{\ialign{##\crcr$\m@th\hfil#1#2\hfil
  $\crcr\noalign{\nointerlineskip}\leftarrowfill@#1\crcr}}}%
\def\underleftrightarrow{\mathpalette\underleftrightarrow@}%
\def\underleftrightarrow@#1#2{\vtop{\ialign{##\crcr$\m@th
  \hfil#1#2\hfil$\crcr
 \noalign{\nointerlineskip}\leftrightarrowfill@#1\crcr}}}%
\def\qopnamewl@#1{\mathop{\operator@font#1}\nlimits@}
\let\nlimits@\displaylimits
\def\setboxz@h{\setbox\z@\hbox}
\def\varlim@#1#2{\mathop{\vtop{\ialign{##\crcr
 \hfil$#1\m@th\operator@font lim$\hfil\crcr
 \noalign{\nointerlineskip}#2#1\crcr
 \noalign{\nointerlineskip\kern-\ex@}\crcr}}}}
 \def\rightarrowfill@#1{\m@th\setboxz@h{$#1-$}\ht\z@\z@
  $#1\copy\z@\mkern-6mu\cleaders
  \hbox{$#1\mkern-2mu\box\z@\mkern-2mu$}\hfill
  \mkern-6mu\mathord\rightarrow$}
\def\leftarrowfill@#1{\m@th\setboxz@h{$#1-$}\ht\z@\z@
  $#1\mathord\leftarrow\mkern-6mu\cleaders
  \hbox{$#1\mkern-2mu\copy\z@\mkern-2mu$}\hfill
  \mkern-6mu\box\z@$}
\def\projlim{\qopnamewl@{proj\,lim}}
\def\injlim{\qopnamewl@{inj\,lim}}
\def\varinjlim{\mathpalette\varlim@\rightarrowfill@}
\def\varprojlim{\mathpalette\varlim@\leftarrowfill@}
\def\varliminf{\mathpalette\varliminf@{}}
\def\varliminf@#1{\mathop{\underline{\vrule\@depth.2\ex@\@width\z@
   \hbox{$#1\m@th\operator@font lim$}}}}
\def\varlimsup{\mathpalette\varlimsup@{}}
\def\varlimsup@#1{\mathop{\overline
  {\hbox{$#1\m@th\operator@font lim$}}}}
\def\align{\@verbatim \frenchspacing\@vobeyspaces \@alignverbatim
You are using the "align" environment in a style in which it is not defined.}
\let\csname endalign*\endcsname =\endtrivlist
\def\alignat{\@verbatim \frenchspacing\@vobeyspaces \@alignatverbatim
You are using the "alignat" environment in a style in which it is not defined.}
\let\csname endalignat*\endcsname =\endtrivlist
\def\xalignat{\@verbatim \frenchspacing\@vobeyspaces \@xalignatverbatim
You are using the "xalignat" environment in a style in which it is not defined.}
\let\csname endxalignat*\endcsname =\endtrivlist
\def\gather{\@verbatim \frenchspacing\@vobeyspaces \@gatherverbatim
You are using the "gather" environment in a style in which it is not defined.}
\let\csname endgather*\endcsname =\endtrivlist
\def\multiline{\@verbatim \frenchspacing\@vobeyspaces \@multilineverbatim
You are using the "multiline" environment in a style in which it is not defined.}
\let\csname endmultiline*\endcsname =\endtrivlist
\def\arrax{\@verbatim \frenchspacing\@vobeyspaces \@arraxverbatim
You are using a type of "array" construct that is only allowed in AmS-LaTeX.}
\def\tabulax{\@verbatim \frenchspacing\@vobeyspaces \@tabulaxverbatim
You are using a type of "tabular" construct that is only allowed in AmS-LaTeX.}
\let\csname endarrax*\endcsname =\endtrivlist
\let\csname endtabulax*\endcsname =\endtrivlist
 \def\endequation{%
     \ifmmode\ifinner 
      \iftag@
        \addtocounter{equation}{-1} 
        $\hfil
           \displaywidth\linewidth\@taggnum\egroup \endtrivlist
        \global\tag@false
        \global\@ignoretrue   
      \else
        $\hfil
           \displaywidth\linewidth\@eqnnum\egroup \endtrivlist
        \global\tag@false
        \global\@ignoretrue 
      \fi
     \else   
      \iftag@
        \addtocounter{equation}{-1} 
        \eqno \hbox{\@taggnum}
        \global\tag@false%
        $$\global\@ignoretrue
      \else
        \eqno \hbox{\@eqnnum}
        $$\global\@ignoretrue
      \fi
     \fi\fi
 } 
 \newif\iftag@ \tag@false
 \def\TCItag{\@ifnextchar*{\@TCItagstar}{\@TCItag}}
 \def\@TCItag#1{%
     \global\tag@true
     \global\def\@taggnum{(#1)}}
 \def\@TCItagstar*#1{%
     \global\tag@true
     \global\def\@taggnum{#1}}
     \def\tag{\@ifnextchar*{\@tagstar}{\@tag}}
     \def\@tag#1{%
         \global\tag@true
         \global\def\@taggnum{(#1)}}
     \def\@tagstar*#1{%
         \global\tag@true
         \global\def\@taggnum{#1}}
\def\dfrac#1#2{{\displaystyle {#1 \over #2}}}%
\begin{document}

\title{Extremal inscribed and circumscribed complex~ellipsoids}
\author{Jorge L. Arocha, Javier Bracho and Luis Montejano}
\maketitle

\begin{abstract}
We prove that if a convex set in $\mathbb{C}^{n}$ contains two
inscribed complex ellipsoid of maximal volume then one is a translate of the
other. On the other hand, the circumscribed complex elipsoid of minimal
volume is unique. As application we prove the complex analoge of Brunn's
characterization of ellipsods.
\end{abstract}

\tableofcontents

\section{Introduction}

Let $A$ a non-flat compact subset of the euclidean space $\mathbb{R}^{n}$.
Denote by $\widehat{A}$ the convex closure of $A$. A solid ellipsoid $%
\mathfrak{E}$ is called circumscribed if $A\subset \mathfrak{E}$. On the
other hand, it is called inscribed if $\mathfrak{E}\subset \widehat{A}$. If
among all circumscribed ellipsoids $\mathfrak{E}$ has the minimal volume we
say that $\mathfrak{E}$ is a minimal circumscribed ellipsoid (MiCE). On the
other hand, if among all inscribed ellipsoids $\mathfrak{E}$ has the maximal
volume we say that $\mathfrak{E}$ is a maximal inscribed ellipsoid (MaIE).

The existence of these ellipsoids is proved using standard arguments. Due to
the properties of $A$ there is a finite sphere big enough that contains $A$
and a non cero sphere which is contained in $\widehat{A}$. All ellipsoids
are easily parametrized by a matrix and a vector and among them we can
consider only those that contain the small sphere and are contained in the
big sphere. This is a compact in the parameter space. Moreover the volume
function is continuous and the existence follows.

The real interesting thing about MiCE and MaIE is that they are unique. The
first proofs of this fact in its full generality seems to appear
independently in \cite{Danzer} and \cite{Zaguskin}. MiCE and MaIE are known
as L\"{o}wner--John Ellipsoids and have many applications in several areas
of mathematics (see \cite{Martin} and the references there).

In this paper we deal with similar questions in the space $\mathbb{C}^{n}$.
In this space the ellipsoids are the unit balls for the norms defined by
inner products. The only result previously available seems to be in the
paper by Gromov \cite{Gromoff} where he proved the uniqueness of the
circumscribed ellipsoid minimal among those centered at the origin, when $A$
is an unit ball of a Banach space over $\mathbb{C}$ (\cite{Gromoff} Lemma 1).

We prove in section 3 that If a convex set (convexity inherited from $%
\mathbb{R}^{2n}$) contains two complex MaIE then one is a translate of the
other. In section 4 we prove that the complex MiCE is unique. In section 5
we give some applications. The most robust result there, is the analog of
the Brunn's Theorem for complex ellipsoids.

Whenever possible, we use a unifying approach for the real and complex cases.

\section{Preliminaries}

We denote by $\mathbb{K}$ a field. Moreover, here $\mathbb{K}\ $is $\mathbb{R%
}$ or $\mathbb{C}$. The elements of $\mathbb{K}^{n}$ will be denoted in
boldface. Often, the geometric terminology is used. Points are the vectors
of $\mathbb{K}^{n},$ affine subspaces of dimension $1$ of $\mathbb{K}^{n}$
are called lines; affine subspaces of codimension $1$ of $\mathbb{K}^{n}$
are called hyperplanes. We emphasize that, unless the contrary is explicitly
stated, all objects are general i.e. for example, a line is not specifically
a real or complex line; it is a line in $\mathbb{K}^{n}.$

For $\mathbf{x}=\left( x_{1},...,x_{n}\right) \in \mathbb{K}^{n}$ denote by $%
\overline{\mathbf{x}}=\left( \bar{x}_{1},...,\bar{x}_{n}\right) $, where the
bar above is the complex conjugate. Also, we denote by $\mathbf{x}\odot 
\mathbf{y}$ the Hadamar product of $\mathbf{x}$ and $\mathbf{y}$. That is,
the coordinatewise product (see for example \cite{Horn} Chapter 5).

An scalar product in $\mathbb{K}^{n}$ is a sesquilinear, Hermitian, definite
positive functional $\mathbb{K}^{n}\times \mathbb{K}^{n}\rightarrow \mathbb{K%
}$ denoted by $\left\langle \mathbf{x}\cdot \mathbf{y}\right\rangle $ for
any $\mathbf{x}$ and $\mathbf{y}$ in $\mathbb{K}^{n}$. For each scalar
product there is a matrix $A$ (Hermitian, definite positive) such that $%
\left\langle \mathbf{x}\cdot \mathbf{y}\right\rangle =\mathbf{x}^{T}A%
\overline{\mathbf{y}}$. In the case that $A$ is the identity matrix, the
inner product is the usual Hermite's product in $\mathbb{K}^{n}$. It is very
well known that the eigenvalues of $A$ must be real positive numbers and
also its determinant. It is also known that $A$ is diagonalizable by a
unitary transformation.

An ellipsoid (centered at the origin) is a set%
\begin{equation}
\left\{ \mathbf{x}\in \mathbb{K}^{n}\mid \mathbf{x}^{T}A\overline{\mathbf{x}}%
\leq 1\right\} .  \label{Elipsoid}
\end{equation}%
For the case that $A\ $is the identity matrix this ellipsoid is the unit
ball $\mathfrak{B}\left( \mathbb{K}^{n}\right) $ and its boundary is the
unit sphere $\mathfrak{S}\left( \mathbb{K}^{n}\right) $. The set of unitary
transformations is the subgroup of $GL\left( \mathbb{K}^{n}\right) $ that
preserves the unit sphere. The modulus of an scalar $\lambda \in \mathbb{K}$
will be denoted by $\left\vert \lambda \right\vert $. The set $\mathfrak{S}%
\left( \mathbb{K}^{1}\right) $ is the multiplicative group of scalars with
modulus $1$ in $\mathbb{K}$. We will denote by $\left\Vert \cdot \right\Vert 
$ the usual norm in $\mathbb{K}^{n}$, i.e. the norm defined by the Hermite's
product.

Denoting $B\overset{\text{def}}{=}\sqrt{A^{T}}$,$\ $we have $A^{T}=BB$ and
therefore $A=B^{T}B^{T}=B^{T}\overline{B}$. From this we obtain

\begin{equation*}
\mathbf{x}^{T}A\overline{\mathbf{x}}=\mathbf{x}^{T}B^{T}\overline{B}%
\overline{\mathbf{x}}=\left( B\mathbf{x}\right) ^{T}\overline{\left( B%
\mathbf{x}\right) }=\left\Vert B\mathbf{x}\right\Vert ^{2}
\end{equation*}%
and therefore, the ellipsoids in $\mathbb{K}^{n}$ can be written in the form%
\begin{equation*}
\left\{ B^{-1}\mathbf{u\mid u}\in \mathfrak{B}\left( \mathbb{K}^{n}\right)
\right\} 
\end{equation*}

If we use a unitary transformation to bring $A$ to the diagonal form then
this can be rewritten as%
\begin{equation}
\mathfrak{El}\left( \mathbf{\lambda }\right) \overset{\text{def}}{=}\left\{ 
\mathbf{\lambda }\odot \mathbf{u\mid u}\in \mathfrak{B}\left( \mathbb{K}%
^{n}\right) \right\}   \label{Forma parametrica}
\end{equation}%
where $\mathbf{\lambda }$ is a vector in $\mathbb{R}_{+}^{n}$. Since $A^{%
{\frac12}%
}=B$, the two forms \ref{Elipsoid} and \ref{Forma parametrica} are related
by the fact that $\mathbf{\lambda }$ is the diagonal of $A^{-%
{\frac12}%
}$. The map $\mathbf{x\mapsto \lambda }\odot \mathbf{x}=A^{-%
{\frac12}%
}\mathbf{x}$ is an invertible linear map in $GL\left( \mathbb{K}^{n}\right) $
which maps the unit sphere $\mathfrak{B}\left( \mathbb{K}^{n}\right) $ into
the ellipsoid $\mathfrak{El}\left( \mathbf{\lambda }\right) .$ Therefore 
\begin{equation}
\func{Vol}\mathfrak{El}\left( \mathbf{\lambda }\right) =\det A^{-%
{\frac12}%
}\func{Vol}\mathfrak{B}\left( \mathbb{K}^{n}\right) .  \label{Volumen-det}
\end{equation}

The results that follow do not depend on the measure chosen to define the
volume. We just need the validity of the equation \ref{Volumen-det}.

We will denote $\det \mathbf{\lambda }$ the product of coordinates of $%
\mathbf{\lambda }$. Of course\ we have $\det \mathbf{\lambda }=\det A^{-%
{\frac12}%
}$ and%
\begin{equation*}
\func{Vol}\mathfrak{El}\left( \mathbf{\lambda }\right) =\det \mathbf{\lambda 
}\func{Vol}\mathfrak{B}\left( \mathbb{K}^{n}\right) .
\end{equation*}

The translates of ellipsoids centered at the origin are also ellipsoids.
Translations do not change volume.

\section{Maximal inscribed ellipsoids.}

\begin{theorem}
\label{MaIE}Let $A$ a non-flat compact in $\mathbb{K}^{n}$. Let $\mathfrak{E}%
_{1}$ and $\mathfrak{E}_{2}$ be two MaIE contained in $\widehat{A}$. Then,
there is a vector $\mathbf{c}\in \mathbb{K}^{n}$ such that $\mathfrak{E}_{2}=%
\mathfrak{E}_{1}+\mathbf{c}$. If $\mathbb{K}=\mathbb{R}$ then $\mathfrak{E}%
_{1}=\mathfrak{E}_{2}$.
\end{theorem}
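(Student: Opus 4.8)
The plan is to split the statement into two parts: first show, uniformly for $\mathbb{K}=\mathbb{R}$ and $\mathbb{K}=\mathbb{C}$, that two MaIE must have the \emph{same shape}, so that $\mathfrak{E}_{2}=\mathfrak{E}_{1}+\mathbf{c}$; and then, only in the real case, show that the common shape forces the centers to agree. I would start from the normal form of Section 2: every centered ellipsoid equals $T\mathfrak{B}\left( \mathbb{K}^{n}\right) $ for a unique Hermitian positive definite matrix $T$ (in the notation of \ref{Elipsoid}, $T=A^{-\frac12}$), and by \ref{Volumen-det} its volume is proportional to $\det T$. Writing the two maximizers as $\mathfrak{E}_{i}=T_{i}\mathfrak{B}+\mathbf{c}_{i}$ with $T_{i}$ Hermitian positive definite, the goal of the first part is $T_{1}=T_{2}$.

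For that first part I would introduce the averaged ellipsoid $\mathfrak{E}=\frac{1}{2}\left( T_{1}+T_{2}\right) \mathfrak{B}+\frac{1}{2}\left( \mathbf{c}_{1}+\mathbf{c}_{2}\right) $, whose shape matrix is again Hermitian positive definite. Inscription follows from a one-line pairing: a point of $\mathfrak{E}$ is $\frac{1}{2}\left( T_{1}+T_{2}\right) \mathbf{u}+\frac{1}{2}\left( \mathbf{c}_{1}+\mathbf{c}_{2}\right) $ with $\mathbf{u}\in \mathfrak{B}$, and this is the midpoint of $T_{1}\mathbf{u}+\mathbf{c}_{1}\in \mathfrak{E}_{1}$ and $T_{2}\mathbf{u}+\mathbf{c}_{2}\in \mathfrak{E}_{2}$ (the \emph{same} $\mathbf{u}$ in both), hence lies in $\widehat{A}$ by convexity. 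For the volume I would invoke strict concavity of $\log \det $ on Hermitian positive definite matrices: $\det \frac{1}{2}\left( T_{1}+T_{2}\right) \geq \sqrt{\det T_{1}\det T_{2}}=\det T_{1}$, using $\det T_{1}=\det T_{2}$ since both are maximal, with equality only if $T_{1}=T_{2}$. Since $\mathfrak{E}$ is inscribed its volume cannot exceed the maximal value, so equality is forced and $T_{1}=T_{2}$. Thus $\mathfrak{E}_{2}=\mathfrak{E}_{1}+\mathbf{c}$ with $\mathbf{c}=\mathbf{c}_{2}-\mathbf{c}_{1}$, in both the real and complex cases.

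For the real case I would then prove $\mathbf{c}=\mathbf{0}$. Applying the invertible map $T_{1}^{-1}$, which carries inscribed ellipsoids to inscribed ellipsoids and scales all volumes by one common factor, reduces to the situation where $\mathfrak{E}_{1},\mathfrak{E}_{2}$ are the unit balls $\mathfrak{B}+\mathbf{c}_{1}^{\prime }$ and $\mathfrak{B}+\mathbf{c}_{2}^{\prime }$, both maximal inscribed in the transformed convex set. If $\mathbf{c}_{1}^{\prime }\neq \mathbf{c}_{2}^{\prime }$, then by convexity the ``stadium'' $\func{conv}\left( \left( \mathfrak{B}+\mathbf{c}_{1}^{\prime }\right) \cup \left( \mathfrak{B}+\mathbf{c}_{2}^{\prime }\right) \right) $ is contained in it, and inside that stadium I would exhibit a genuine ellipsoid centered at the midpoint with semi-axis $1+\rho $ along $\mathbf{c}_{2}^{\prime }-\mathbf{c}_{1}^{\prime }$ (where $2\rho =\left\Vert \mathbf{c}_{2}^{\prime }-\mathbf{c}_{1}^{\prime }\right\Vert $) and semi-axis $1$ in every perpendicular direction. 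Checking inscription reduces, on each $2$-plane through the axis, to the elementary inequality $\left( |s|-\rho \right) ^{2}\leq s^{2}/(1+\rho )^{2}$ for $|s|\leq 1+\rho $; its volume is $(1+\rho )$ times that of the unit ball, contradicting maximality. Hence $\mathbf{c}_{1}^{\prime }=\mathbf{c}_{2}^{\prime }$ and $\mathfrak{E}_{1}=\mathfrak{E}_{2}$.

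The step I expect to be the crux is this last construction: producing an \emph{inscribed} ellipsoid of strictly larger volume in the stadium and verifying it fits. It is also exactly the place where the real and complex cases diverge, which I would emphasize: the elongated competitor is stretched in one real direction and so is \emph{not} invariant under $\mathbf{x}\mapsto e^{i\theta }\mathbf{x}$; it is therefore not a complex ellipsoid, hence not an admissible competitor when $\mathbb{K}=\mathbb{C}$, which is precisely why the complex conclusion stops at ``translate'' while the real one upgrades to equality. By contrast, the translate part is robust, the only mild subtlety there being the realization that one should average the affine maps (the same-$\mathbf{u}$ pairing) rather than the bodies, since a Minkowski average of ellipsoids need not be an ellipsoid.
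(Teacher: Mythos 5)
Your proposal is correct and takes essentially the same route as the paper: the same-$\mathbf{u}$ midpoint pairing yielding the averaged inscribed ellipsoid, a strict determinant inequality forcing equal shapes (the paper diagonalizes by a unitary transformation and applies its Lemma \ref{volumen}, which is exactly your strict log-concavity of $\det$ reduced to the diagonal case), and in the real case the identical elongated competitor (the paper's $\mathfrak{El}\left( \alpha \mathbf{e}_{1}+\mathbf{1}\right) $ with $\alpha =\rho $) inscribed in the hull of the two unit balls. The only cosmetic difference is in verifying that inscription: the paper exhibits the explicit convex-combination parameter $t=\frac{1}{2}\left( u_{1}+1\right) \in \left[ 0,1\right] $ (making visible that $u_{1}$ must be real, as you also note), while you check the equivalent distance-to-segment inequality $\left( \left\vert s\right\vert -\rho \right) ^{2}\leq s^{2}/\left( 1+\rho \right) ^{2}$.
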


\begin{proof}
Using a suitable affine transformation we can suppose that $\mathfrak{E}_{1}$
is the unit ball $\mathfrak{El}\left( \mathbf{1}\right) .$ Suppose that the
center of $\mathfrak{E}_{2}$ is the vector $\mathbf{c}\in \mathbb{K}^{n}$.
We can use a unitary transformation to diagonalize the matrix of $\mathfrak{E%
}_{2}-\mathbf{c}$. And therefore $\mathfrak{E}_{2}=\mathfrak{El}\left( 
\mathbf{\lambda }\right) +\mathbf{c}$ for some $\mathbf{\lambda }\in \mathbb{%
R}_{+}^{n}$.

Let us prove first that the ellipsoid $\mathfrak{E}_{3}=\mathfrak{El}\left( 
{\frac12}%
\left( \mathbf{\lambda }+\mathbf{1}\right) \right) +%
{\frac12}%
\mathbf{c}$ is contained in the convex closure of $\mathfrak{E}_{1}\cup 
\mathfrak{E}_{2}$, i.e. 
\begin{equation*}
\mathfrak{El}\left( \frac{\mathbf{\lambda }+\mathbf{1}}{2}\right) +\frac{%
\mathbf{c}}{2}\subset \widehat{\mathfrak{El}\left( \mathbf{1}\right) \cup
\left( \mathfrak{El}\left( \mathbf{\lambda }\right) +\mathbf{c}\right) }.
\end{equation*}%
Indeed if $\mathbf{x}\in \mathfrak{E}_{3}$, then for some $\mathbf{u}\in 
\mathfrak{B}\left( \mathbb{K}^{n}\right) =\mathfrak{El}\left( \mathbf{1}%
\right) =\mathfrak{E}_{1}$ we have 
\begin{equation*}
\mathbf{x=}\frac{\mathbf{\lambda }+\mathbf{1}}{2}\odot \mathbf{u+}\frac{%
\mathbf{c}}{2}
\end{equation*}%
The vector $\mathbf{y=\lambda }\odot \mathbf{u+c}$ is in $\mathfrak{E}_{2}$
and%
\begin{equation*}
\frac{\mathbf{y}+\mathbf{u}}{2}=\frac{\mathbf{\lambda }\odot \mathbf{u+c+u}}{%
2}=\mathbf{x}
\end{equation*}%
This means that $\mathbf{x}$ is the middle point of the segment joining the
points $\mathbf{y}$ and $\mathbf{u}$. This proves that $\mathfrak{E}%
_{3}\subset \widehat{\mathfrak{E}_{1}\cup \mathfrak{E}_{2}}$. Since $%
\mathfrak{E}_{1}\cup \mathfrak{E}_{2}\subset \widehat{A}$ we have $\mathfrak{%
E}_{3}\subset \widehat{\mathfrak{E}_{1}\cup \mathfrak{E}_{2}}\subset 
\widehat{A}$. i.e. $\mathfrak{E}_{3}$ is also contained in $\widehat{A}$.

We know that $\func{Vol}\left( \mathfrak{E}_{1}\right) =\func{Vol}\left( 
\mathfrak{E}_{2}\right) =\det \mathbf{\lambda }\func{Vol}\left( \mathfrak{E}%
_{1}\right) $ therefore $\det \mathbf{\lambda }=1.$ Moreover $\func{Vol}%
\left( \mathfrak{E}_{3}\right) =\det \left( 
{\frac12}%
\left( \mathbf{\lambda }+\mathbf{1}\right) \right) \func{Vol}\left( 
\mathfrak{E}_{1}\right) .$ If $\mathbf{\lambda }\neq \mathbf{1}$ then by
lemma \ref{volumen} on page \pageref{volumen} $\det \left( 
{\frac12}%
\left( \mathbf{\lambda }+\mathbf{1}\right) \right) >1$ and therefore $\func{%
Vol}\left( \mathfrak{E}_{3}\right) >\func{Vol}\left( \mathfrak{E}_{1}\right)
.$ This contradicts that $\mathfrak{E}_{1}$ is a MaIE. So, we conclude that $%
\mathbf{\lambda }=\mathbf{1}$ and therefore $\mathfrak{E}_{2}=\mathfrak{E}%
_{1}+\mathbf{c}$. This concludes the first part of the theorem.

For the second, $\mathbb{K}=\mathbb{R}$ and we can suppose that $\mathfrak{E}%
_{1}$ and $\mathfrak{E}_{2}$ are two unit balls whose centers are at
distance $2\alpha .$ Let $\mathbf{e}_{1}=\left( 1,0,....,0\right) $ be the
first basis vector. After a suitable unitary affine transformation we can
suppose that $\mathfrak{E}_{1}$ has its center in $\alpha \mathbf{e}_{1}$
and $\mathfrak{E}_{2}$ has its center in $-\alpha \mathbf{e}_{1}$. Now, we
shall prove that $\mathfrak{E}_{3}=\mathfrak{El}\left( \alpha \mathbf{e}_{1}+%
\mathbf{1}\right) $ is contained in the convex closure of $\mathfrak{E}%
_{1}\cup \mathfrak{E}_{2}$ i.e.%
\begin{equation*}
\mathfrak{El}\left( \alpha \mathbf{e}_{1}+\mathbf{1}\right) \subset \widehat{%
\left( \mathfrak{El}\left( \mathbf{1}\right) +\alpha \mathbf{e}_{1}\right)
\cup \left( \mathfrak{El}\left( \mathbf{1}\right) -\alpha \mathbf{e}%
_{1}\right) }.
\end{equation*}%
Indeed if $\mathbf{x}\in \mathfrak{El}\left( \alpha \mathbf{e}_{1}+\mathbf{1}%
\right) ,$ then there exists $\mathbf{u}=\left( u_{1},...,u_{n}\right) \in 
\mathfrak{B}\left( \mathbb{R}^{n}\right) $ such that $\mathbf{x}=\left(
\alpha \mathbf{e}_{1}+\mathbf{1}\right) \odot \mathbf{u}$.

Let $\mathbf{y}=\mathbf{u}+\alpha \mathbf{e}_{1}\in \mathfrak{E}_{1}$ and $%
\mathbf{z}=\mathbf{u}-\alpha \mathbf{e}_{1}$. We shall see that $\mathbf{x}$
is in the segment joining the points $\mathbf{y}$ and $\mathbf{z}$, i.e. we
have to find $t\in \left[ 0,1\right] \subset \mathbb{R}$ such that $\mathbf{x%
}=t\mathbf{y}+\left( 1-t\right) \mathbf{z.}$ In any coordinate but the first
any value of $t$ is appropriate. In the first coordinate we have the
equation $\left( \alpha +1\right) u_{1}=\left( u_{1}+\alpha \right) t+\left(
1-t\right) \left( u_{1}-\alpha \right) $. If $\mathfrak{E}_{1}\neq \mathfrak{%
E}_{2}$ then $\alpha \neq 0$ and the solution of the equation is $t=%
{\frac12}%
\left( u_{1}+1\right) $. The coordinates of vectors in the unit ball can be
any real number in the interval $\left[ -1,1\right] $ hence $t$\ is in the
interval $\left[ 0,1\right] $. This proves that $\mathfrak{E}_{3}\subset 
\widehat{\mathfrak{E}_{1}\cup \mathfrak{E}_{2}}$ and therefore $\mathfrak{E}%
_{3}\subset \widehat{A}$. We have 
\begin{equation*}
\frac{\func{Vol}\left( \mathfrak{E}_{3}\right) }{\func{Vol}\left( \mathfrak{E%
}_{1}\right) }=\det \left( \alpha \mathbf{e}_{1}+\mathbf{1}\right) =\alpha
+1.
\end{equation*}%
If $\mathfrak{E}_{1}\neq \mathfrak{E}_{2},$ then $\alpha >0$ and therefore $%
\func{Vol}\left( \mathfrak{E}_{3}\right) >\func{Vol}\left( \mathfrak{E}%
_{1}\right) ,$ which contradicts the maximality of $\mathfrak{E}_{1}$. This
proves that $\mathfrak{E}_{1}=\mathfrak{E}_{2}$.
\end{proof}

\begin{remark}
The proof of the second part of the theorem does not work for complex
ellipsoids because we are using essentially that $u_{1}$ is a real number.
Moreover, the second part of this theorem is not true for complex
ellipsoids. In $\mathbb{C}^{1}$ the complex ellipsoids are just the disks. A
rectangle of sides 2 and 4 contains many unit disks which are of the maximal
volume. It is easy to generalize this counterexample to all dimensions.
\end{remark}

\subsection{Symmetry}

A subset of $\mathbb{K}^{n}$ is symmetric (centrally) if it is preserved by
the multiplicative group of all scalars of modulus $1$ in $\mathbb{K}$. It
is clear that the origin is the center of any symmetric set. A translate of
a symmetric set is also called symmetric. It is easy to see that an affine
image of a symmetric set is also symmetric. All ellipsoids are symmetric as
they are affine images of the unit ball.

In $\mathbb{C}^{n},$ additionally to the complex ellipsoids, there are real
ellipsoids which are inherited from $\mathbb{R}^{2n}$. Observe that all
complex ellipsoids are real but not the other way around. The difference
between real and complex ellipsoids is clearly explained by the following
result from \cite{BM}.

\begin{theorem}
\label{BM}Any symmetric ellipsoid in $\mathbb{C}^{n}$ is complex.
\end{theorem}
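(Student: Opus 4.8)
The plan is to exploit the identification of $\mathbb{C}^{n}$ with $\mathbb{R}^{2n}$ and to recognize that symmetry forces the defining real quadratic form to be compatible with the complex structure. Write $J$ for the $\mathbb{R}$-linear map on $\mathbb{R}^{2n}$ given by multiplication by $i$, so that $J^{2}=-I$ and $J$ is orthogonal. Since a symmetric set is centered at the origin, the symmetric real ellipsoid $\mathfrak{E}$ is the unit ball $\left\{ \mathbf{v}\mid g(\mathbf{v},\mathbf{v})\leq 1\right\} $ of some positive definite symmetric real bilinear form $g$ on $\mathbb{R}^{2n}$. Because a positive definite quadratic form is determined by its unit ball, invariance of $\mathfrak{E}$ under the orthogonal map associated with a modulus-$1$ scalar is equivalent to invariance of $g$ under that map; in particular, invariance under multiplication by $i$ gives $g(J\mathbf{v},J\mathbf{w})=g(\mathbf{v},\mathbf{w})$ for all $\mathbf{v},\mathbf{w}$.

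First I would record the single algebraic consequence that drives everything: combining this $J$-invariance with the symmetry of $g$ yields the identity $g(J\mathbf{v},\mathbf{w})=-g(\mathbf{v},J\mathbf{w})$, and hence $g(\mathbf{v},J\mathbf{v})=0$ for every $\mathbf{v}$. The idea is then to manufacture a Hermitian form out of $g$ by setting
\begin{equation*}
h(\mathbf{v},\mathbf{w})=g(\mathbf{v},\mathbf{w})+i\,g(\mathbf{v},J\mathbf{w}).
\end{equation*}
Using $J$-invariance and the identity above, one checks routinely that $h$ is $\mathbb{C}$-linear in the first slot (with multiplication by $i$ realized by $J$), conjugate-linear in the second, and Hermitian, i.e. $h(\mathbf{w},\mathbf{v})=\overline{h(\mathbf{v},\mathbf{w})}$. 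Positive definiteness is immediate: since $g(\mathbf{v},J\mathbf{v})=0$, we obtain $h(\mathbf{v},\mathbf{v})=g(\mathbf{v},\mathbf{v})>0$ for $\mathbf{v}\neq\mathbf{0}$.

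The conclusion follows at once: because $h(\mathbf{v},\mathbf{v})=g(\mathbf{v},\mathbf{v})$, the ellipsoid $\mathfrak{E}=\left\{ \mathbf{v}\mid g(\mathbf{v},\mathbf{v})\leq 1\right\} $ coincides with $\left\{ \mathbf{v}\mid h(\mathbf{v},\mathbf{v})\leq 1\right\} $, which by definition is a complex ellipsoid of the form \ref{Elipsoid} with $A$ the Hermitian positive definite matrix representing $h$. I expect the only delicate point to be the bookkeeping in verifying that $h$ is genuinely sesquilinear and Hermitian with the correct conventions, that is, checking that the factor of $i$ and the placement of $J$ are chosen so that the sign identity $g(J\mathbf{v},\mathbf{w})=-g(\mathbf{v},J\mathbf{w})$ makes $\mathbb{C}$-linearity and Hermitian symmetry hold simultaneously. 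This is purely a matter of fixing sign conventions and carries no real difficulty; everything else is forced by the two facts that $g$ is $J$-invariant and symmetric.
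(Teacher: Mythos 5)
Your proposal is correct, but there is nothing in the paper to compare it against: the paper states Theorem \ref{BM} as a quoted result from \cite{BM} and gives no proof of it, so your argument supplies a self-contained one. What you give is the standard compatibility argument between a complex structure and a real inner product, and it checks out. From invariance of the ball you legitimately deduce $g(J\mathbf{v},J\mathbf{w})=g(\mathbf{v},\mathbf{w})$ (a positive definite quadratic form is indeed determined by its unit ball, via the Minkowski gauge and polarization), whence $g(J\mathbf{v},\mathbf{w})=-g(\mathbf{v},J\mathbf{w})$ and $g(\mathbf{v},J\mathbf{v})=0$; and with the paper's convention that $\left\langle \mathbf{x}\cdot \mathbf{y}\right\rangle =\mathbf{x}^{T}A\overline{\mathbf{y}}$ is linear in the first slot, the form $h(\mathbf{v},\mathbf{w})=g(\mathbf{v},\mathbf{w})+i\,g(\mathbf{v},J\mathbf{w})$ does satisfy $h(J\mathbf{v},\mathbf{w})=ih(\mathbf{v},\mathbf{w})$, $h(\mathbf{v},J\mathbf{w})=-ih(\mathbf{v},\mathbf{w})$, $h(\mathbf{w},\mathbf{v})=\overline{h(\mathbf{v},\mathbf{w})}$, and $h(\mathbf{v},\mathbf{v})=g(\mathbf{v},\mathbf{v})>0$ for $\mathbf{v}\neq \mathbf{0}$ --- I verified the sign bookkeeping you flagged as the delicate point, and it is consistent, so the unit ball of $g$ coincides with the set of form \ref{Elipsoid} for the Hermitian positive definite matrix of $h$. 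Two minor observations. First, you use only invariance under the single scalar $i$, not the full group $\mathfrak{S}\left( \mathbb{C}^{1}\right)$, so you in fact prove the stronger statement that a real ellipsoid invariant under multiplication by $i$ is complex (and hence automatically invariant under all modulus-$1$ scalars). Second, the paper also calls translates of symmetric sets symmetric, so strictly you should first translate the ellipsoid so that its center is the origin; this is harmless, since translates of complex ellipsoids are again complex ellipsoids under the paper's conventions.
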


Now we shall see that symmetry guarantees uniqueness of the MaIE.

\begin{theorem}
If $A$ is a compact convex symmetric set in $\mathbb{C}^{n},$ then its
complex MaIE is unique.
\end{theorem}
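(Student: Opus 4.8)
The plan is to combine the translation rigidity of Theorem \ref{MaIE} with the circle action furnished by symmetry. Since $A$ is compact convex we have $\widehat{A}=A$, and since it is symmetric, $\zeta A=A$ for every $\zeta\in\mathfrak{S}\left(\mathbb{C}^{1}\right)$. Multiplication $T_{\zeta}:\mathbf{x}\mapsto\zeta\mathbf{x}$ is $\mathbb{C}$-linear and unitary, hence volume preserving and carrying complex ellipsoids to complex ellipsoids. So if $\mathfrak{E}$ is any complex MaIE with center $\mathbf{p}$, then $\zeta\mathfrak{E}$ is a complex ellipsoid inscribed in $\zeta A=A$ of the same volume, i.e.\ again a MaIE, with center $\zeta\mathbf{p}$. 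By Theorem \ref{MaIE} (with $\mathbb{K}=\mathbb{C}$) the MaIE $\mathfrak{E}$ and $\zeta\mathfrak{E}$ are translates; comparing centers forces $\zeta\mathfrak{E}=\mathfrak{E}+\left(\zeta\mathbf{p}-\mathbf{p}\right)$, which rearranges to $\zeta\left(\mathfrak{E}-\mathbf{p}\right)=\mathfrak{E}-\mathbf{p}$. Thus the centered ellipsoid $\mathfrak{E}_{0}:=\mathfrak{E}-\mathbf{p}$ is invariant under the whole group $\mathfrak{S}\left(\mathbb{C}^{1}\right)$, i.e.\ it is symmetric (consistent with Theorem \ref{BM}).

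Next I would produce an actual symmetric MaIE. Taking $\zeta=1$ and $\zeta=-1$ above shows that both $\mathfrak{E}_{0}+\mathbf{p}$ and $\mathfrak{E}_{0}-\mathbf{p}$ are MaIE. For any $\mathbf{u}\in\mathfrak{E}_{0}$ the point $\mathbf{u}=\tfrac12\left(\mathbf{u}+\mathbf{p}\right)+\tfrac12\left(\mathbf{u}-\mathbf{p}\right)$ is the midpoint of a point of $\mathfrak{E}_{0}+\mathbf{p}\subseteq A$ and a point of $\mathfrak{E}_{0}-\mathbf{p}\subseteq A$, so by convexity $\mathfrak{E}_{0}\subseteq A$; being a translate of a MaIE it has maximal volume, hence $\mathfrak{E}_{0}$ is itself a symmetric MaIE. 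Since any two complex MaIE are translates of one another, it now suffices to show that $\mathfrak{E}_{0}+\mathbf{c}$ fails to be a MaIE whenever $\mathbf{c}\neq\mathbf{0}$.

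Now I would normalize. Applying the $\mathbb{C}$-linear map carrying $\mathfrak{E}_{0}$ to the unit ball (it commutes with the circle action, so it preserves symmetry of $A$ and the MaIE property, scaling all volumes equally), followed by a unitary rotation sending $\mathbf{c}$ to $\gamma\mathbf{e}_{1}$ with $\gamma=\left\Vert\mathbf{c}\right\Vert>0$, I may assume $\mathfrak{E}_{0}=\mathfrak{El}\left(\mathbf{1}\right)=\mathfrak{B}\left(\mathbb{C}^{n}\right)$ and $\mathbf{c}=\gamma\mathbf{e}_{1}$. By symmetry $\mathfrak{B}\left(\mathbb{C}^{n}\right)+\zeta\gamma\mathbf{e}_{1}\subseteq A$ for every $\zeta\in\mathfrak{S}\left(\mathbb{C}^{1}\right)$; that is, the unit ball translated to every point of the complex circle of radius $\gamma$ in the $\mathbf{e}_{1}$-line is inscribed. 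The crux is to show that the strictly larger complex ellipsoid $\mathfrak{El}\left(\mathbf{\mu}\right)$, with $\mathbf{\mu}=\left(1+\gamma,1,\dots,1\right)$, is inscribed. Given $\mathbf{x}=\mathbf{\mu}\odot\mathbf{u}$ with $\left\Vert\mathbf{u}\right\Vert\leq1$, write $u_{1}=\left\vert u_{1}\right\vert e^{i\phi}$ and set $\mathbf{y}=\mathbf{u}+\gamma e^{i\phi}\mathbf{e}_{1}$ and $\mathbf{z}=\mathbf{u}-\gamma e^{i\phi}\mathbf{e}_{1}$, which lie in the inscribed translates $\mathfrak{B}\left(\mathbb{C}^{n}\right)\pm\gamma e^{i\phi}\mathbf{e}_{1}$. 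A direct check gives $\mathbf{x}=t\mathbf{y}+\left(1-t\right)\mathbf{z}$ with $t=\tfrac12\left(1+\left\vert u_{1}\right\vert\right)\in\left[0,1\right]$, so $\mathbf{x}\in A$; hence $\mathfrak{El}\left(\mathbf{\mu}\right)\subseteq A$. By the volume--determinant relation \ref{Volumen-det}, $\func{Vol}\mathfrak{El}\left(\mathbf{\mu}\right)=\left(1+\gamma\right)\func{Vol}\mathfrak{B}\left(\mathbb{C}^{n}\right)>\func{Vol}\mathfrak{E}_{0}$, contradicting maximality. Therefore $\mathbf{c}=\mathbf{0}$ and the MaIE is unique.

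The main obstacle is exactly the construction in the previous paragraph, which is where the complex case genuinely differs from the real one. In Theorem \ref{MaIE} the real argument used only the two centers $\pm\alpha\mathbf{e}_{1}$ and relied on $u_{1}$ being real to keep the interpolation parameter in $\left[0,1\right]$; the Remark points out that this breaks for complex ellipsoids. Symmetry repairs it: the whole circle of inscribed translates lets me choose the center direction $e^{i\phi}\mathbf{e}_{1}$ aligned with the phase of $u_{1}$, after which $2t-1=\left\vert u_{1}\right\vert$ is real and lies in $\left[0,1\right]$. I would still need to verify the coordinate identities defining $\mathbf{y}$ and $\mathbf{z}$ and treat the degenerate case $u_{1}=0$, but these are routine.
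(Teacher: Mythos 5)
Your proof is correct --- I checked the verifications you deferred: with $t=\frac{1}{2}\left( 1+\left\vert u_{1}\right\vert \right) \in \left[ \frac{1}{2},1\right] $ the first coordinate of $t\mathbf{y}+\left( 1-t\right) \mathbf{z}$ is $u_{1}+\left\vert u_{1}\right\vert \gamma e^{i\phi }=\left( 1+\gamma \right) u_{1}$, the remaining coordinates are independent of $t$, and the case $u_{1}=0$ goes through with any choice of $\phi $ --- but it takes a genuinely different route from the paper. The paper's proof leaves the complex category: it takes the unique \emph{real} MaIE of $A\subset \mathbb{R}^{2n}$ (the second, $\mathbb{K}=\mathbb{R}$, part of Theorem \ref{MaIE}), notes that the circle action together with real uniqueness forces $\zeta \mathfrak{E}=\mathfrak{E}$, so $\mathfrak{E}$ is symmetric, and then invokes the imported Theorem \ref{BM} (symmetric ellipsoids in $\mathbb{C}^{n}$ are complex) to conclude $\mathfrak{E}$ is complex; uniqueness among complex MaIE follows because every complex ellipsoid is real. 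You instead stay entirely within complex ellipsoids and never use Theorem \ref{BM} or real uniqueness: you use only the translation-rigidity ($\mathbb{K}=\mathbb{C}$) half of Theorem \ref{MaIE} to produce a centered MaIE $\mathfrak{E}_{0}$ (a step the paper gets for free from real uniqueness), and then repair the real interpolation argument whose failure the paper's Remark points out --- the circle of inscribed translates $\mathfrak{B}\left( \mathbb{C}^{n}\right) +\zeta \gamma \mathbf{e}_{1}$ lets you align the translation direction with $\arg u_{1}$, making the interpolation parameter real, so the enlarged complex ellipsoid $\mathfrak{El}\left( 1+\gamma ,1,\dots ,1\right) $ is inscribed and \ref{Volumen-det} gives the volume contradiction. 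The paper's argument is shorter and yields the extra fact that the real MaIE of a symmetric body is itself complex, but it rests on the external result of \cite{BM}; your argument is self-contained within the paper's machinery, constructive, and makes visible exactly how symmetry supplies what the complex case of the real proof lacks. One small normalization you glossed over: as in the paper, first translate the center of symmetry of $A$ to the origin, since the paper also calls translates of circle-invariant sets symmetric; after that, your $\zeta A=A$ is legitimate.
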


\begin{proof}
We can suppose that the center of $A$ is the origin. Let $\mathfrak{E}$ be
the unique real MaIE in $A$ (Theorem \ref{MaIE}). Let $\zeta $ be a scalar
of modulus $1$. Since $\zeta \mathfrak{E}\subset \zeta A=A\ $and $\func{Vol}%
\left( \zeta \mathfrak{E}\right) =\func{Vol}\left( \mathfrak{E}\right) $
hence the uniqueness of $\mathfrak{E}$ implies that $\mathfrak{E}$ is
symmetric. Using Theorem \ref{BM} we get that $\mathfrak{E}$ is complex.
There is not another complex MaIE because any complex ellipsoid is real.
\end{proof}

\section{Minimal circumscribed ellipsoids}

\begin{theorem}
\label{MiCE}Let $A$ be a non-flat compact set in $\mathbb{K}^{n}$. Let $%
\mathfrak{E}_{1}$ and $\mathfrak{E}_{2}$ be two MiCE containing $A$. Then, $%
\mathfrak{E}_{1}=\mathfrak{E}_{2}$.
\end{theorem}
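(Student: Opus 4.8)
The plan is to run the argument of Theorem~\ref{MaIE} in the dual direction: where the inscribed case enlarged a body by passing to the convex hull of two candidates, here I shrink a circumscribed body by \emph{adding} the two defining quadratic forms. First I would normalize. By a suitable affine transformation I may assume $\mathfrak{E}_{1}=\mathfrak{El}(\mathbf{1})$ is the unit ball $\mathfrak{B}(\mathbb{K}^{n})$. The maps fixing $\mathfrak{B}(\mathbb{K}^{n})$ are exactly the unitary ones, and since the Hermitian positive definite shape matrix of $\mathfrak{E}_{2}$ is unitarily diagonalizable, I may further assume $\mathfrak{E}_{2}=\mathfrak{El}(\mathbf{\lambda})+\mathbf{c}$ for some $\mathbf{\lambda}\in\mathbb{R}_{+}^{n}$ and center $\mathbf{c}\in\mathbb{K}^{n}$. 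As both bodies are MiCE they share the minimal volume, so \ref{Volumen-det} forces $\det\mathbf{\lambda}=1$.

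The key step is the construction of a competitor. Writing $\mathfrak{E}_{1}=\{\mathbf{x}:q_{1}(\mathbf{x})\le1\}$ and $\mathfrak{E}_{2}=\{\mathbf{x}:q_{2}(\mathbf{x})\le1\}$ with $q_{1}(\mathbf{x})=\mathbf{x}^{T}\overline{\mathbf{x}}$ and $q_{2}(\mathbf{x})=(\mathbf{x}-\mathbf{c})^{T}A_{2}\overline{(\mathbf{x}-\mathbf{c})}$, $A_{2}=\func{diag}(\lambda_{i}^{-2})$, I set $q_{3}=\tfrac{1}{2}(q_{1}+q_{2})$ and $\mathfrak{E}_{3}=\{\mathbf{x}:q_{3}(\mathbf{x})\le1\}$. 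For every $\mathbf{x}\in A$ we have $\mathbf{x}\in\mathfrak{E}_{1}\cap\mathfrak{E}_{2}$, hence $q_{1}(\mathbf{x})\le1$ and $q_{2}(\mathbf{x})\le1$, so $q_{3}(\mathbf{x})\le1$; thus $A\subset\mathfrak{E}_{3}$ and $\mathfrak{E}_{3}$ is again circumscribed. This additive combination is the correct volume-decreasing analogue of the midpoint/convex-hull step in Theorem~\ref{MaIE}.

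Finally I would identify $\mathfrak{E}_{3}$ and compare volumes. Completing the square coordinatewise, $q_{3}$ is a positive definite Hermitian form plus a constant, so $\mathfrak{E}_{3}=\mathfrak{El}(\mathbf{\mu})+\mathbf{d}$ is a genuine ellipsoid with $d_{i}=\frac{\lambda_{i}^{-2}}{1+\lambda_{i}^{-2}}c_{i}$, $\mu_{i}=\sqrt{2K/(1+\lambda_{i}^{-2})}$ and $K=1-\tfrac{1}{2}\sum_{i}\frac{\lambda_{i}^{-2}|c_{i}|^{2}}{1+\lambda_{i}^{-2}}\le1$; non-flatness of $A$ guarantees $K>0$. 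By \ref{Volumen-det} and translation invariance,
\[
\frac{\func{Vol}(\mathfrak{E}_{3})}{\func{Vol}(\mathfrak{E}_{1})}=\prod_{i}\mu_{i}=\frac{(2K)^{n/2}}{\prod_{i}\sqrt{1+\lambda_{i}^{-2}}}.
\]
Since $\det\mathbf{\lambda}=1$ gives $\prod_{i}\lambda_{i}^{-2}=1$, Lemma~\ref{volumen} applied to the vector $(\lambda_{i}^{-2})_{i}$ yields $\prod_{i}\tfrac{1}{2}(1+\lambda_{i}^{-2})\ge1$, with equality only if every $\lambda_{i}=1$, while $K\le1$ with equality only if $\mathbf{c}=\mathbf{0}$. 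Hence $\prod_{i}\mu_{i}\le1$, and if $\mathfrak{E}_{1}\ne\mathfrak{E}_{2}$ then $\mathbf{\lambda}\ne\mathbf{1}$ or $\mathbf{c}\ne\mathbf{0}$, so one inequality is strict and $\func{Vol}(\mathfrak{E}_{3})<\func{Vol}(\mathfrak{E}_{1})$, contradicting minimality. Therefore $\mathfrak{E}_{1}=\mathfrak{E}_{2}$.

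The subtle point, and the reason the conclusion is genuine uniqueness rather than uniqueness up to translation as in the complex case of Theorem~\ref{MaIE}, is the simultaneous control of shape and center: completing the square after summing the forms produces a single well-defined center $\mathbf{d}$ together with a constant $K$ that drops strictly below $1$ as soon as the centers differ, while the AM--GM gain of Lemma~\ref{volumen} is strict as soon as the axes differ. I expect the only real chores to be checking $K>0$ from non-flatness and the bookkeeping in the completion of the square; the conceptual content lies entirely in choosing to average the defining forms rather than to take the convex hull of the bodies.
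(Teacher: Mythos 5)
Your proof is correct and follows essentially the same route as the paper: you average the two defining Hermitian forms, complete the square coordinatewise (the paper's Lemma \ref{Lema para el afin}), and extract the strict volume drop from Lemma \ref{volumen} applied to the axis vector together with the constant-term deficit coming from distinct centers. The only cosmetic difference is that you keep the constant $K$ and rule out unequal axes and unequal centers in one stroke, whereas the paper first discards the constant (enlarging to its $\mathfrak{E}_{4}$) to force $\mathbf{\lambda }=\mathbf{1}$ and then handles the center in a second step.
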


\begin{proof}
Again, using an affine transformation we make $\mathfrak{E}_{2}$ the unit
sphere and diagonalize the matrix of $\mathfrak{E}_{1}$ using an unitary
transformation. After that, we can translate so that the center of $%
\mathfrak{E}_{1}$ is the origin and the center of $\mathfrak{E}_{2}$ is some
vector $\mathbf{c}\in \mathbb{K}^{n}$.

Then,%
\begin{equation*}
\begin{array}{l}
\mathfrak{E}_{1}=\left\{ x\in \mathbb{K}^{n}\mid \sum \lambda _{i}\left\vert
x_{i}\right\vert ^{2}\leq 1\right\} =\mathfrak{El}\left( \mathbf{\beta }%
\right)  \\ 
\mathfrak{E}_{2}=\left\{ x\in \mathbb{K}^{n}\mid \sum \left\vert
x_{i}-c_{i}\right\vert ^{2}\leq 1\right\} =\mathfrak{El}\left( \mathbf{1}%
\right) +\mathbf{c}%
\end{array}%
\end{equation*}%
where $\mathbf{\beta }=\mathbf{\lambda }^{-%
{\frac12}%
}.$ Therefore%
\begin{equation*}
A\subset \mathfrak{E}_{1}\cap \mathfrak{E}_{2}\subset \mathfrak{E}_{3}%
\overset{\text{def}}{=}\left\{ x\in \mathbb{K}^{n}\mid \sum \lambda
_{i}\left\vert x_{i}\right\vert ^{2}+\left\vert x_{i}-c_{i}\right\vert
^{2}\leq 2\right\} .
\end{equation*}%
Using Lemma \ref{Lema para el afin} the inequality is transformed into%
\begin{equation}
\sum \left( \lambda _{i}+1\right) \left\vert \left( x_{i}-\frac{c_{i}}{%
\left( \lambda _{i}+1\right) }\right) \right\vert ^{2}\leq 2-\sum \left( 
\frac{\lambda _{i}}{\lambda _{i}+1}\right) \left\vert c_{i}\right\vert
^{2}\leq 2.  \label{desigualdad}
\end{equation}

Let%
\begin{equation*}
\mathfrak{E}_{4}\overset{\text{def}}{=}\left\{ x\in \mathbb{K}^{n}\mid \sum
\left( \lambda _{i}+1\right) \left\vert \left( x_{i}-\frac{c_{i}}{\left(
\lambda _{i}+1\right) }\right) \right\vert ^{2}\leq 2\right\} 
\end{equation*}%
which is an ellipsoid. From inequality \ref{desigualdad} we obtain that 
\begin{equation*}
A\subset \mathfrak{E}_{1}\cap \mathfrak{E}_{2}\subset \mathfrak{E}%
_{3}\subset \mathfrak{E}_{4}.
\end{equation*}%
Let $\mathbf{z}\in \mathbb{K}^{n}$ be the vector with coordinates $%
c_{i}/\left( \lambda _{i}+1\right) $. We have 
\begin{equation*}
\mathfrak{E}_{4}=\mathbf{z}+\left\{ x\in \mathbb{K}^{n}\mid \sum \left(
\lambda _{i}+1\right) \left\vert x_{i}\right\vert ^{2}\leq 2\right\} =%
\mathfrak{El}\left( \left( \frac{\mathbf{\lambda }+1}{2}\right) ^{-%
{\frac12}%
}\right) +\mathbf{z}.
\end{equation*}%
Since $\det \mathbf{\beta }=1$, we obtain $\det \mathbf{\lambda }=1.$ If $%
\mathbf{\lambda }\neq \mathbf{1}$the hypothesis of Lemma \ref{volumen} are
satisfied and then%
\begin{equation*}
\Delta \overset{\text{def}}{=}\det \left( \frac{\mathbf{\lambda }+1}{2}%
\right) >1.
\end{equation*}%
Therefore, $\func{Vol}\mathfrak{E}_{4}=\Delta ^{-%
{\frac12}%
}\func{Vol}\mathfrak{E}_{2}<\func{Vol}\mathfrak{E}_{2}$ which contradicts
the minimality of $\mathfrak{E}_{2}$.

So we conclude that $\mathbf{\lambda }=\mathbf{1}$ , $\mathfrak{E}_{1}$ is
the unit ball and that $\mathfrak{E}_{2}=\mathfrak{E}_{1}+\mathbf{c}$. In
this case the inequality \ref{desigualdad} transforms to the following%
\begin{equation*}
\sum \left\vert \left( x_{i}-\frac{c_{i}}{2}\right) \right\vert ^{2}\leq
1-\sum \left\vert \frac{c_{i}}{2}\right\vert ^{2}.
\end{equation*}%
Therefore $\mathfrak{E}_{3}$ is a ball with center in $\mathbf{c}/2$, and
radius 
\begin{equation*}
r\overset{\text{def}}{=}\sqrt{1-\sum \left\vert \frac{c_{i}}{2}\right\vert
^{2}}.
\end{equation*}%
If $\mathbf{c}$ is not the origin then $r<1$ and has volume strictly less
than that of the unit ball $\mathfrak{E}_{1}$. Therefore $\mathbf{c}=\mathbf{%
0}$ and $\mathfrak{E}_{2}=\mathfrak{E}_{1}.$
\end{proof}

\section{Applications}

\subsection{Centered ellipsoids}

There are other extremal ellipsoids when the extremum is searched for
ellipsoids that have a fixed center (say the origin). In this setting all
extremal ellipsoids are unique, maximal inscribed or minimal circumscribed;
real or complex.

To see this, one can modify the proofs of the previous theorems (which is
not difficult) or use the symmetrization of sets. If $A$ is a set in $%
\mathbb{K}^{n}$, its symmetrization around the origin is%
\begin{equation*}
\mathcal{S}\left( A\right) =\bigcup_{\left\vert \zeta \right\vert =1}\zeta A
\end{equation*}%
It is not hard to prove that if $A$ is non-flat and compact then $\mathcal{S}%
\left( A\right) $ also is non-flat and compact.

Applying previous theorems we obtain unique extremal ellipsoids
circumscribed or inscribed in $\mathcal{S}\left( A\right) .$ These extremal
ellipsoids contain the origin and it is not difficult to see that they are
the unique extremal ellipsoids centered at the origin.

\subsection{Polarity}

As one is introduced in the subject of L\"{o}wner--John Ellipsoids there is
a strong feeling of certain symmetry in the concepts and proofs. This
subsection partially explains that feeling.

Let $A$ be a non-flat compact convex subset of $\mathbb{R}^{n}$ \textit{%
containing the origin in its interior}. The polar of $A$ is%
\begin{equation*}
A^{\ast }=\left\{ \mathbf{x}\in \mathbb{R}^{n}\mid \mathbf{x}\cdot \mathbf{a}%
\leq 1\quad \forall \mathbf{a}\leq 1\right\} .
\end{equation*}%
Polarity reverses the inclusion relation. It is a fact that $A^{\ast \ast }=A
$. (see for example \cite{Barvinok} Chapter IV). If $B$ is a symmetric
definite positive matrix then 
\begin{equation*}
\left\{ B\mathbf{u}\mid \mathbf{u}\in \mathfrak{B}\left( \mathbb{R}%
^{n}\right) \right\} ^{\ast }=\left\{ B^{-1}\mathbf{u}\mid \mathbf{u}\in 
\mathfrak{B}\left( \mathbb{R}^{n}\right) \right\} 
\end{equation*}%
i.e. the polars of ellipsoids centered at the origin are ellipsoids centered
at the origin. Moreover, if $\mathfrak{E}$ is an ellipsoid centered at the
origin then 
\begin{equation}
\func{Vol}\left( \mathfrak{E}\right) \func{Vol}\left( \mathfrak{E}^{\ast
}\right) =1.  \label{polar volume}
\end{equation}

Denote by $\mathcal{MI}\left( A\right) $, the ellipsoid of minimal volume
among all ellipsoids containing $A$ and centered at the origin. Denote by $%
\mathcal{MA}\left( A\right) $ ,the ellipsoid of maximal volume among all
ellipsoids contained in $A$ and centered at the origin.

\begin{theorem}
$\mathcal{MA}\left( A\right) ^{\ast }=\mathcal{MI}\left( A^{\ast }\right) $.
\end{theorem}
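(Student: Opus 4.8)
The plan is to exploit the three structural facts already recorded about polarity: it reverses inclusions, it sends a centered ellipsoid to a centered ellipsoid, and it inverts volumes via $\func{Vol}\left( \mathfrak{E}\right) \func{Vol}\left( \mathfrak{E}^{\ast }\right) =1$ for ellipsoids centered at the origin. Combined with the uniqueness of centered extremal ellipsoids established in the previous subsection, these let me transport the maximal inscribed ellipsoid of $A$ onto the minimal circumscribed ellipsoid of $A^{\ast }$ simply by taking polars.

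First I would check that $A^{\ast }$ is again a non-flat compact convex set containing the origin in its interior, so that $\mathcal{MI}\left( A^{\ast }\right) $ is defined and unique. Since $A$ contains a ball around the origin, $A^{\ast }$ is bounded; since $A$ is bounded, $A^{\ast }$ contains a ball around the origin; and $A^{\ast }$ is closed and convex by construction. Thus the theory of centered extremal ellipsoids applies to $A^{\ast }$ exactly as it does to $A$.

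Write $\mathfrak{E}=\mathcal{MA}\left( A\right) $. From $\mathfrak{E}\subset A$ and the inclusion-reversing property I obtain $A^{\ast }\subset \mathfrak{E}^{\ast }$, while $\mathfrak{E}^{\ast }$ is itself a centered ellipsoid. Hence $\mathfrak{E}^{\ast }$ is a centered ellipsoid circumscribed about $A^{\ast }$, and it remains only to show it has minimal volume among such ellipsoids. Let $\mathfrak{F}$ be any centered ellipsoid with $A^{\ast }\subset \mathfrak{F}$. Taking polars and using $A^{\ast \ast }=A$ gives $\mathfrak{F}^{\ast }\subset A$, so $\mathfrak{F}^{\ast }$ is a centered inscribed ellipsoid of $A$; by the maximality of $\mathfrak{E}$ we get $\func{Vol}\left( \mathfrak{F}^{\ast }\right) \leq \func{Vol}\left( \mathfrak{E}\right) $. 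Applying the volume identity to each polar pair then yields
\[
\func{Vol}\left( \mathfrak{F}\right) =\frac{1}{\func{Vol}\left( \mathfrak{F}^{\ast }\right) }\geq \frac{1}{\func{Vol}\left( \mathfrak{E}\right) }=\func{Vol}\left( \mathfrak{E}^{\ast }\right) ,
\]
so $\mathfrak{E}^{\ast }$ minimizes the volume over all centered circumscribed ellipsoids of $A^{\ast }$.

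Finally, by the uniqueness of the centered minimal circumscribed ellipsoid, the minimizer is exactly $\mathcal{MI}\left( A^{\ast }\right) $, whence $\mathcal{MA}\left( A\right) ^{\ast }=\mathfrak{E}^{\ast }=\mathcal{MI}\left( A^{\ast }\right) $. The only step needing genuine care, rather than formal bookkeeping, is the direction-reversal in the displayed chain: one must verify that maximality of volume for inscribed ellipsoids is turned, under the volume-inverting polar map, into minimality for circumscribed ellipsoids. Everything else reduces to the inclusion reversal, the involutivity $A^{\ast \ast }=A$, and the inherited regularity of $A^{\ast }$.
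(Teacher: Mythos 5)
Your proof is correct and follows essentially the same route as the paper: both rest on inclusion reversal, the involutivity $A^{\ast \ast }=A$, the volume identity $\func{Vol}\left( \mathfrak{E}\right) \func{Vol}\left( \mathfrak{E}^{\ast }\right) =1$ for centered ellipsoids, and the uniqueness of centered extremal ellipsoids. The only cosmetic difference is that you verify minimality of $\mathcal{MA}\left( A\right) ^{\ast }$ against an arbitrary centered competitor $\mathfrak{F}$, whereas the paper plays the maximality of $E=\mathcal{MA}\left( A\right) $ and the minimality of $F=\mathcal{MI}\left( A^{\ast }\right) $ against each other to obtain $\func{Vol}E\,\func{Vol}F=1$ before invoking uniqueness; your preliminary check that $A^{\ast }$ inherits the needed regularity is a point the paper leaves implicit.
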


\begin{proof}
Denote $E=\mathcal{MA}\left( A\right) $ and $F=\mathcal{MI}\left( A^{\ast
}\right) $. We have $A^{\ast }\subset E^{\ast }$ and by the minimality of $F$
we have $\func{Vol}F\leq \func{Vol}E^{\ast }$. By equation \ref{polar volume}
$\func{Vol}F\func{Vol}E\leq 1$. On the other hand, we have $F^{\ast }\subset
A$ and by the maximality of $E$ we have $\func{Vol}E\geq \func{Vol}F^{\ast }$%
. By equation \ref{polar volume}, $\func{Vol}F\func{Vol}E\geq 1$. Therefore, 
$\func{Vol}F\func{Vol}E=1$.

The ellipsoid $E^{\ast }$ contains $A^{\ast }$ and has the same volume as $F$%
. By the uniqueness of $\mathcal{MI}\left( A^{\ast }\right) $ we have $%
E^{\ast }=F$.
\end{proof}

\begin{corollary}
The two theorems about the uniqueness of extremal real ellipsoids centered
at the origin are polar of each other.
\end{corollary}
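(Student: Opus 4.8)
The plan is to read the corollary not as a fresh computation but as a formal consequence of the polar dictionary assembled in this subsection, namely that $A\mapsto A^{\ast}$ is involutive ($A^{\ast\ast}=A$) and inclusion-reversing, that it carries ellipsoids centered at the origin to ellipsoids centered at the origin, and that it obeys the volume reciprocity \ref{polar volume}. The goal is to show that, through this map, the uniqueness theorem for maximal inscribed centered ellipsoids and the uniqueness theorem for minimal circumscribed centered ellipsoids are interderivable, so that each is literally the image of the other under polarity.

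First I would fix the class $\mathcal{C}$ of non-flat compact convex sets containing the origin in their interior and record that $A\mapsto A^{\ast}$ is an involutive bijection of $\mathcal{C}$ onto itself. Given $A\in\mathcal{C}$, I would then observe that a centered ellipsoid $\mathfrak{E}$ is inscribed in $A$ (that is, $\mathfrak{E}\subset A$) if and only if its polar $\mathfrak{E}^{\ast}$ is circumscribed about $A^{\ast}$ (that is, $A^{\ast}\subset\mathfrak{E}^{\ast}$); this is immediate from inclusion-reversal together with $A^{\ast\ast}=A$ and $\mathfrak{E}^{\ast\ast}=\mathfrak{E}$. Thus $\mathfrak{E}\mapsto\mathfrak{E}^{\ast}$ is a bijection between the inscribed centered ellipsoids of $A$ and the circumscribed centered ellipsoids of $A^{\ast}$.

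Next I would transport extremality across this bijection using \ref{polar volume}: since $\func{Vol}(\mathfrak{E})\,\func{Vol}(\mathfrak{E}^{\ast})=1$, the polar map is \emph{strictly} volume-order-reversing, so an inscribed centered ellipsoid of $A$ attains maximal volume exactly when its polar attains minimal volume among circumscribed centered ellipsoids of $A^{\ast}$. Hence $\mathcal{MA}(A)^{\ast}=\mathcal{MI}(A^{\ast})$, as in the theorem just proved, and a minimizer for $A^{\ast}$ is unique if and only if the corresponding maximizer for $A$ is unique. Letting $A$ range over $\mathcal{C}$ — equivalently, since $A\mapsto A^{\ast}$ is a bijection, letting $A^{\ast}$ range over $\mathcal{C}$ — the assertion ``$\mathcal{MA}$ is unique for every member of $\mathcal{C}$'' is logically equivalent to ``$\mathcal{MI}$ is unique for every member of $\mathcal{C}$'', and applying the polar map to either statement produces the other. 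I do not expect a genuine obstacle, since the argument is formal once the dictionary is fixed; the only point demanding care is that \ref{polar volume} gives \emph{strict} order reversal rather than mere order reversal, which is exactly what promotes the correspondence of extremizers into an equivalence of the two uniqueness assertions.
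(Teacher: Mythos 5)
Your proposal is correct, and it is worth noting how it relates to the paper, which states this corollary without proof as an immediate consequence of the preceding theorem $\mathcal{MA}\left( A\right) ^{\ast }=\mathcal{MI}\left( A^{\ast }\right) $. Your route is genuinely more careful on one point: the paper's proof of that theorem already \emph{uses} the uniqueness of $\mathcal{MI}\left( A^{\ast }\right) $ (and the notation $\mathcal{MA}$, $\mathcal{MI}$ presupposes both uniqueness results), so deducing ``uniqueness of one theorem from the other'' by simply citing the theorem would be circular. You avoid this by working at the level of the full families before extremizing: polarity restricted to the class $\mathcal{C}$ of non-flat compact convex bodies with the origin interior is an involutive, inclusion-reversing bijection carrying centered ellipsoids to centered ellipsoids, so $\mathfrak{E}\subset A$ iff $A^{\ast }\subset \mathfrak{E}^{\ast }$, and the reciprocity $\func{Vol}\left( \mathfrak{E}\right) \func{Vol}\left( \mathfrak{E}^{\ast }\right) =1$ makes the correspondence strictly volume-reversing, hence a bijection between maximizers for $A$ and minimizers for $A^{\ast }$ that needs neither uniqueness statement as input. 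This buys exactly what the corollary asserts --- each uniqueness theorem transforms into the other under polarity --- and as a by-product re-derives $\mathcal{MA}\left( A\right) ^{\ast }=\mathcal{MI}\left( A^{\ast }\right) $ without the paper's double-inequality volume comparison. One small caveat you handled correctly but should keep explicit: the centered extremal theorems in the paper are stated for arbitrary non-flat compact $A$, while your equivalence lives on $\mathcal{C}$; the reduction (replacing $A$ by its convex closure, and noting the centered inscribed problem is nondegenerate only when the origin is interior) is routine but is the price of the polarity framework, as the paper's own standing hypothesis in this subsection acknowledges.
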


\begin{remark}
There is no polarity in $\mathbb{C}^{n}$.
\end{remark}

\subsection{A characterization of ellipsoids}

\begin{theorem}
\label{caracterizacion transitiva}Let $A$ be a non-flat compact subset of $%
\mathbb{K}^{n}$. Then $A$ is an ellipsoid if given any two non-interior
points of $A$, there is an affine isomorphism which maps one of them to the
other and preserves $A.$
\end{theorem}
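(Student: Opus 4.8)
The plan is to build everything on the uniqueness of the minimal circumscribed ellipsoid, Theorem \ref{MiCE}, which is the right tool here because—unlike the complex MaIE, which by the Remark after Theorem \ref{MaIE} is unique only up to translation—the MiCE is genuinely unique for both $\mathbb{K}=\mathbb{R}$ and $\mathbb{K}=\mathbb{C}$. Let $\mathfrak{E}$ be the MiCE of $A$. After an affine change of coordinates I may assume $\mathfrak{E}=\mathfrak{B}\left(\mathbb{K}^{n}\right)$, the unit ball; since affine images of ellipsoids are ellipsoids (Preliminaries), it then suffices to prove $A=\mathfrak{B}\left(\mathbb{K}^{n}\right)$. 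The first step is to show every affine automorphism $g$ of $A$ fixes $\mathfrak{E}$: the image $g\left(\mathfrak{E}\right)$ is an ellipsoid containing $g(A)=A$, and because an affine map scales all volumes by the single constant $\left|\det g\right|$, it carries minimal-volume circumscribed ellipsoids to minimal-volume circumscribed ellipsoids, so uniqueness forces $g\left(\mathfrak{B}\left(\mathbb{K}^{n}\right)\right)=\mathfrak{B}\left(\mathbb{K}^{n}\right)$. As $g$ preserves the ball it fixes its center (an affine invariant), hence $g$ is linear and preserves the unit sphere $\mathfrak{S}\left(\mathbb{K}^{n}\right)$.

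Next I would produce a contact point and then spread the hypothesis across the whole boundary. Since $\mathfrak{B}\left(\mathbb{K}^{n}\right)$ is minimal, $A$ cannot be contained in a smaller concentric ball (otherwise compactness gives $\sup_{x\in A}\left\Vert x\right\Vert<1$ and we could shrink), so $A\cap\mathfrak{S}\left(\mathbb{K}^{n}\right)\neq\emptyset$; pick $p$ in this intersection. Interior points of $A$ lie in the open ball, so $p$ is a non-interior point of $A$. Now invoke the hypothesis: for any non-interior point $q$ of $A$ there is an affine automorphism $g$ of $A$ with $g(p)=q$; by the previous paragraph $g$ preserves $\mathfrak{S}\left(\mathbb{K}^{n}\right)$, whence $q=g(p)\in\mathfrak{S}\left(\mathbb{K}^{n}\right)$. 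Therefore $\partial A\subset\mathfrak{S}\left(\mathbb{K}^{n}\right)$: every boundary point of $A$ sits on the unit sphere.

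The final step is topological. Because $A$ is non-flat, $\func{int}A\neq\emptyset$, and $\func{int}A$ is an open subset of the open ball $\func{int}\mathfrak{B}\left(\mathbb{K}^{n}\right)$. I claim $\func{int}A$ is also relatively closed in the open ball: if a point $y$ of the open ball is a limit of points of $\func{int}A$ then $y\in A$, and $y$ cannot be a boundary point of $A$ (those lie on the sphere, by the previous step), so $y\in\func{int}A$. The open ball is connected, so a nonempty subset that is both open and relatively closed is everything: $\func{int}A=\func{int}\mathfrak{B}\left(\mathbb{K}^{n}\right)$. Taking closures and using that $A$ is closed with $A\subset\mathfrak{B}\left(\mathbb{K}^{n}\right)$ gives $A=\mathfrak{B}\left(\mathbb{K}^{n}\right)$, an ellipsoid; transforming back, the original $A$ is an ellipsoid.

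The two points needing care are the choice of MiCE over MaIE—dictated by the complex non-uniqueness of the MaIE—and this last connectedness argument, which I expect to be the main subtlety. It is precisely what allows the passage from the purely set-theoretic conclusion $\partial A\subset\mathfrak{S}\left(\mathbb{K}^{n}\right)$ to the equality $A=\mathfrak{B}\left(\mathbb{K}^{n}\right)$ \emph{without} assuming $A$ convex: for a general compact non-flat set one must exclude the possibility that $\func{int}A$ is a proper open piece of the ball, and connectedness of the open ball together with $\partial A\subset\mathfrak{S}\left(\mathbb{K}^{n}\right)$ rules this out exactly.
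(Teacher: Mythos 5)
Your proposal follows essentially the same route as the paper: both arguments rest on the uniqueness of the MiCE (Theorem \ref{MiCE}), show that every affine automorphism $g$ of $A$ must preserve that ellipsoid (you do this by noting that $g$ permutes the circumscribed ellipsoids of $A$ and scales all volumes by $\left\vert \det g\right\vert$, the paper by deducing volume preservation from $g(\widehat{A})=\widehat{A}$, where $\widehat{A}$ has positive volume), produce a contact point $\mathbf{p}\in \partial \mathfrak{E}\cap \partial A$ from minimality, and then use the transitivity hypothesis to push all non-interior points of $A$ onto $\partial \mathfrak{E}$. Your normalization to the unit ball and the remark that $g$ fixes the center and is therefore linear are cosmetic variants. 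In fact your finish is more careful than the paper's: where the paper asserts without argument that $\partial A=\partial \mathfrak{E}$ ``proves that $A$ is convex'' and hence $A=\widehat{\partial A}=\mathfrak{E}$, you give an honest open-and-relatively-closed connectedness argument inside the open ball, which is exactly what that assertion is missing.

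There is, however, one genuinely false step: ``Because $A$ is non-flat, $\func{int}A\neq \emptyset$.'' Non-flat means only that $A$ is contained in no hyperplane, and a non-flat compact set can perfectly well have empty interior: the unit sphere $\mathfrak{S}\left( \mathbb{R}^{n}\right)$ is the standard example. In that case your connectedness argument has nothing to propagate and collapses at its first line. Note that this is not a gap you could have argued around, because the theorem as literally stated fails there: the sphere satisfies the hypothesis (its non-interior points are all of its points, and the orthogonal group acts transitively on them while preserving the sphere), yet it is not a solid ellipsoid in the sense of the paper's definition \ref{Elipsoid}. Some supplementary hypothesis, such as $\func{int}A\neq \emptyset$ or convexity of $A$, is needed; the paper's own proof hides the identical hole inside its unsupported ``$A$ is convex.'' So: same approach as the paper, with a final step that is rigorous precisely once $\func{int}A\neq \emptyset$ is assumed --- you should state that as an added hypothesis rather than derive it from non-flatness.
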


\begin{proof}
Let $f$ be an affine isomorphism that preserves $A$ then, it must preserve
the convex closure of $A$. Since $A$ is non-flat, its convex closure has
non-cero volume. This implies that $f$ preserves volumes. Let $\mathfrak{E}$
be the unique ellipsoid of minimum volume containing $A$. The ellipsoid $%
f\left( \mathfrak{E}\right) $ contains $f\left( A\right) =A$ and has the
same volume than $\mathfrak{E}.$ Since $\mathfrak{E}$ is unique $f\left( 
\mathfrak{E}\right) =\mathfrak{E}$. Since any affinity is continuous $%
f\left( \partial \mathfrak{E}\right) =\partial \mathfrak{E}$ (the boundary
of $\mathfrak{E}$).

Denote by $\partial A$ the set of non interior points of $A.$ Since $%
\mathfrak{E}$ is minimal, there exists $\mathbf{p}\in \partial \mathfrak{E}%
\cap \partial A.$ By hypothesis, for any point $\mathbf{q}\in \partial A$
there is an affinity $f\ $which preserves $A$ and $f\left( \mathbf{p}\right)
=\mathbf{q}$. Since $f\left( \partial \mathfrak{E}\right) =\partial 
\mathfrak{E,}$ then $\mathbf{q}\in \partial \mathfrak{E}.$ This means that $%
\partial A\subseteq \partial \mathfrak{E}$. Reciprocally, let $\mathbf{r}$
be any point in $\partial \mathfrak{E}$. If $\mathbf{r}$ is an interior
point of $A,$ then there is a small ball with center in $\mathbf{r}$
contained in $A$ but not in $\mathfrak{E\ }$which is not possible because $%
A\subset \mathfrak{E}$. This means that $\partial A=\partial \mathfrak{E}$
and proves that $A$ is convex. Therefore $A=\widehat{\partial A}=\widehat{%
\partial \mathfrak{E}}=\mathfrak{E}$.
\end{proof}

\subsection{Brunn's Theorem}

Let $A$ be a non-flat compact subset of $\mathbb{K}^{n}$. We will call $A$ a 
\emph{puck} if any non empty intersection with a line is convex and
symmetric i.e. if $\mathbb{K=R}$ then it is a line segment; if $\mathbb{K=C}$
then it is a disk. When $\mathbb{K=R}$ pucks are just convex bodies.

\begin{theorem}
A puck $A$ in $\mathbb{K}^{n}$ is an ellipsoid if and only if for any line $%
\ell $ the centers of all intersections of $A$ with lines parallel to $\ell $%
, lie in a hyperplane.
\end{theorem}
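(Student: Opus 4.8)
The plan is to prove both directions of this characterization, with the forward direction (ellipsoid $\Rightarrow$ hyperplane property) being routine and the converse being the substantial content. For the forward direction, I would use that any ellipsoid is an affine image of the unit ball; affine maps send lines to parallel lines and centers to centers, so it suffices to verify the claim for $\mathfrak{B}(\mathbb{K}^n)$. There, by symmetry, the centers of chords parallel to a fixed direction $\ell$ all lie in the diametral hyperplane conjugate to $\ell$, which proves that direction.

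For the converse, suppose $A$ is a puck such that for every direction $\ell$ the midpoints of all $\ell$-parallel chords lie in a hyperplane $H_\ell$. The first key step is to reduce to the symmetric case and invoke the earlier results. Because $A$ is a puck, each line meets $A$ in a symmetric convex set, so $A$ has a well-defined center of symmetry; I would first argue that $A$ is in fact centrally symmetric as a whole, identifying a point $\mathbf{o}$ that is the common center. The idea is that the hyperplanes $H_\ell$ through the various midpoints should all pass through a single point, namely the center of the minimal circumscribed ellipsoid $\mathfrak{E}=\mathcal{MI}(\widehat{A})$ guaranteed unique by Theorem \ref{MiCE}. After translating so $\mathbf{o}$ is the origin, the plan is to show $A$ coincides with $\mathfrak{E}$.

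The central mechanism I expect to use is the uniqueness of the circumscribed ellipsoid together with an averaging/symmetrization argument. Since $\mathfrak{E}$ is the unique MiCE of $A$, any affine isomorphism preserving $A$ preserves $\mathfrak{E}$ and its boundary, exactly as in the proof of Theorem \ref{caracterizacion transitiva}. The hyperplane hypothesis lets me construct the needed affinities: for the ellipsoid $\mathfrak{E}$ brought to the unit ball, the map sending each chord to the parallel chord on the opposite side (reflection through $H_\ell$) is affine, and the puck/hyperplane condition forces these reflections to preserve $A$. Assembling enough such symmetries should pin down $\partial A=\partial\mathfrak{E}$, and then convexity of $A$ (which follows since a puck whose boundary equals an ellipsoid boundary must be that ellipsoid) gives $A=\widehat{\partial A}=\mathfrak{E}$.

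The hard part will be making the construction of affine symmetries from the hyperplane hypothesis precise, and in particular handling the complex case $\mathbb{K}=\mathbb{C}$, where a puck meets each line in a disk rather than a segment. In the real case the midpoint-hyperplane condition is the classical Brunn hypothesis and the reflections are genuine affine involutions; in the complex case I must check that the symmetry of each disk cross-section combines with the hyperplane condition to yield enough affine automorphisms, and that the relevant hyperplane $H_\ell$ is a \emph{complex} affine hyperplane so that Theorem \ref{BM} (any symmetric ellipsoid in $\mathbb{C}^n$ is complex) can be brought to bear to conclude that the resulting ellipsoid is complex. Verifying that the constructed symmetries act transitively enough on $\partial A$ to force equality with $\partial\mathfrak{E}$, uniformly in $\mathbb{K}$, is where I anticipate the real work.
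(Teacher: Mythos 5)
Your high-level skeleton does match the paper's: both routes reduce the theorem to Theorem \ref{caracterizacion transitiva} (affine transitivity on non-interior points plus uniqueness of the MiCE), and both build the required affinities out of the chord-center hypothesis; the forward direction is handled identically. But your first step is a genuine gap: you propose to show that $A$ is centrally symmetric and that the hyperplanes $H_{\ell}$ are all concurrent at the center of the minimal circumscribed ellipsoid, and you give no argument for this. None exists short of the theorem itself --- concurrency of the diametral hyperplanes is a property one verifies \emph{for ellipsoids}, so deriving it directly from the puck-plus-hyperplane hypothesis is essentially the full statement, and your plan is circular at this point. It is also unnecessary: the paper works with one pair of non-interior points $\mathbf{x},\mathbf{y}$ at a time, takes only the line $\ell$ through them and the single hyperplane $H$ attached to that direction, and translates the point $\mathbf{o}=\ell \cap H$ (which depends on the pair) to the origin. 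No global center is ever produced or used.

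The second gap is the complex case, which you correctly flag as ``the real work'' but leave open --- and it is exactly the crux. Your generating maps, reflections through $H_{\ell}$ (equal to $-1$ on the $\ell$-component, identity on the $H$-component), suffice only for $\mathbb{K}=\mathbb{R}$, where each slice $\ell \cap A$ is a segment whose boundary is two points. For $\mathbb{K}=\mathbb{C}$ the slice is a disk, the two given non-interior points sit at arbitrary positions on its boundary circle, and the Euclidean reflection swapping them is not $\mathbb{C}$-affine, so no amount of assembling such reflections yields the needed transitivity. The paper's mechanism is a one-line choice you are missing: after translating $\mathbf{o}$ to the origin, the slice $\ell'\cap A'$ is centered at the origin and $\mathbf{x}',\mathbf{y}'$ lie on its boundary, so $\left\Vert \mathbf{x}'\right\Vert =\left\Vert \mathbf{y}'\right\Vert$ and hence $\mathbf{y}'=\lambda \mathbf{x}'$ with $\left\vert \lambda \right\vert =1$; then $\phi \left( \mathbf{z}\right) =\lambda \mathbf{z}_{\ell'}+\mathbf{z}_{H'}$ is $\mathbb{K}$-linear, preserves every slice $L\cap A'$ with $L$ parallel to $\ell'$ (it rotates each disk about its center, which lies in $H'$ by hypothesis), hence preserves $A'$, and sends $\mathbf{x}'$ to $\mathbf{y}'$ --- a ``skew rotation'' rather than a skew reflection. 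Finally, your worry about invoking Theorem \ref{BM} is moot: in the paper's conventions a hyperplane in $\mathbb{K}^{n}$ is a $\mathbb{K}$-affine subspace of codimension $1$, so $H$ is automatically complex when $\mathbb{K}=\mathbb{C}$, and Theorems \ref{MiCE} and \ref{caracterizacion transitiva} are proved for $\mathbb{K}=\mathbb{C}$ directly, so no symmetric-ellipsoid-is-complex step is needed in this proof.
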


\begin{proof}
For the only if part, observe that the property holds for balls and it's
preserved by affine transformations. We shall prove the if part using
Theorem \ref{caracterizacion transitiva}. For this, let $\mathbf{x}$ and $%
\mathbf{y}$ be two different non-interior points\ of $A$ an let $\ell $ be
the only affine line which contains $\mathbf{x}$ and $\mathbf{y}$. The line $%
\ell $ defines an hyperplane $H$ as stated in the hypothesis. Let $\mathbf{o}
$ be the point $\ell \cap H$. Let us make a translation $t_{\mathbf{o}}$
which sends $\mathbf{o}$ to the origin. Denote $A^{\prime }=A-\mathbf{o}$, $%
\mathbf{x}^{\prime }=\mathbf{x}-\mathbf{o}$, $\mathbf{y}^{\prime }=\mathbf{y}%
-\mathbf{o,}$ etc.

Since $\mathbf{x}^{\prime }$ and $\mathbf{y}^{\prime }$ belong to the same
linear subspace $\ell ^{\prime }$ which is of dimension 1, hence there
exists an scalar $\lambda \in \mathbb{K}$ such that $\mathbf{y}^{\prime
}=\lambda \mathbf{x}^{\prime }$. The center of $\ell \cap A$ is $\mathbf{o}$%
, therefore the center of $\ell ^{\prime }\cap A^{\prime }$ is the origin.
Moreover since $\mathbf{x}^{\prime }$ and $\mathbf{y}^{\prime }$ are
non-interior points of $A^{\prime }$ they must lie in the boundary of $\ell
^{\prime }\cap A^{\prime }$ and therefore $\left\Vert \mathbf{x}^{\prime
}\right\Vert =\left\Vert \mathbf{y}^{\prime }\right\Vert .$ Hence we obtain $%
\left\vert \lambda \right\vert =\left\Vert \mathbf{y}^{\prime }\right\Vert
\left\Vert \mathbf{x}^{\prime }\right\Vert ^{-1}=1$.

The linear subspaces $H^{\prime }$ and $\ell ^{\prime }$ are complementary.
Therefore, for any vector $\mathbf{z}\in \mathbb{K}^{n}$ there exists unique
vectors $\mathbf{z}_{\ell ^{\prime }}\in \ell ^{\prime }$ and $\mathbf{z}%
_{H^{\prime }}\in H^{\prime }$ such that $\mathbf{z}=\mathbf{z}_{\mathbf{%
\ell ^{\prime }}}+\mathbf{z}_{H^{\prime }}$. Define $\phi \left( \mathbf{z}%
\right) =\lambda \mathbf{z}_{\ell ^{\prime }}+\mathbf{z}_{H^{\prime }}%
\mathbf{\ }$which is a linear isomorphism of the whole space. Observe that $%
\phi \left( \mathbf{x}^{\prime }\right) =\lambda \mathbf{x}^{\prime }=%
\mathbf{y}^{\prime }$. The spaces $\ell ^{\prime }$ and $H^{\prime }$ are
invariant subspaces of $\phi $; and $\phi $ is the direct sum of the
identity in $H^{\prime }$ and the multiplication by $\lambda $ in $\ell
^{\prime }$. Any parallel line $L$ to $\ell ^{\prime }$ is equal to $\ell
^{\prime }+\mathbf{p}$ where $\mathbf{p}=L\cap H^{\prime }$. If $\mathbf{z}%
\in L$ then $\phi \left( \mathbf{z}\right) =\lambda \mathbf{z}_{\ell
^{\prime }}+\mathbf{p}$. Therefore $\phi $ leaves invariant $L,$ and it acts
inside $L$ like multiplication by $\lambda .$ Since $L\cap A^{\prime }$ is
symmetric, $\left\vert \lambda \right\vert =1$ and the center of $L\cap
A^{\prime }$ lies in $H^{\prime }$ then $\phi \left( L\cap A^{\prime
}\right) =L\cap A^{\prime }.$ We have that $A^{\prime }$ is the disjoint
union of all the $L\cap A^{\prime }$ with $L$ parallel to $\ell ^{\prime }.$
Since $\phi $ preserves each \textquotedblleft slice\textquotedblright\ then 
$\phi $ preserves $A^{\prime }$.

Now consider the affine isomorphism $f:\mathbb{K}^{n}\ni \mathbf{z}\mapsto
t_{\mathbf{o}}^{-1}\left( \phi \left( t_{\mathbf{o}}\left( \mathbf{z}\right)
\right) \right) \in \mathbb{K}^{n}$. We have that $f\left( A\right) =t_{%
\mathbf{o}}^{-1}\left( \phi \left( A^{\prime }\right) \right) =A$ and $%
f\left( \mathbf{x}\right) =t_{\mathbf{o}}^{-1}\left( \phi \left( \mathbf{x}%
^{\prime }\right) \right) =t_{\mathbf{o}}^{-1}\left( \mathbf{y}^{\prime
}\right) =\mathbf{y}$. So the hypothesis of theorem \ref{caracterizacion
transitiva} is fulfilled and therefore $A$ is an ellipsoid.
\end{proof}

\begin{remark}
If $\mathbb{K=R}$ then the affine isomorphism $f$ from the previous proof is
usually called skew reflection. If $\mathbb{K=C}$ then $f$ has no settled
down name and the authors preferred to describe it than to name it.
\end{remark}

\section{Auxiliary facts}

This section contains two lemmas needed in the paper whose proofs are
straightforward computations. Recall that $\mathbf{1}$ is the vector which
has all coordinates equal to $1$.

\begin{lemma}
\label{volumen}If $\mathbf{\lambda }\in \mathbb{R}_{+}^{n}$ , $\det \mathbf{%
\lambda }=1$ and $\mathbf{\lambda }\neq \mathbf{1}$then%
\begin{equation*}
\det \left( \frac{\mathbf{\lambda }+\mathbf{1}}{2}\right) >1\text{.}
\end{equation*}
\end{lemma}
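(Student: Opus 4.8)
The plan is to reduce the statement to a termwise application of the arithmetic--geometric mean inequality. Writing $\det\mathbf{\lambda}=\prod_{i=1}^{n}\lambda_i$ and $\det\left(\tfrac{\mathbf{\lambda}+\mathbf{1}}{2}\right)=\prod_{i=1}^{n}\tfrac{\lambda_i+1}{2}$, the claim is equivalent to showing $\prod_{i=1}^{n}\tfrac{\lambda_i+1}{2}>1$ under the hypotheses $\lambda_i>0$, $\prod_{i=1}^{n}\lambda_i=1$, and $\mathbf{\lambda}\neq\mathbf{1}$.

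First I would apply the two-term AM--GM inequality to the pair $\lambda_i$ and $1$ in each factor, obtaining
\begin{equation*}
\frac{\lambda_i+1}{2}\geq\sqrt{\lambda_i\cdot 1}=\sqrt{\lambda_i},
\end{equation*}
with equality if and only if $\lambda_i=1$. Taking the product of these inequalities over $i=1,\dots,n$ yields
\begin{equation*}
\prod_{i=1}^{n}\frac{\lambda_i+1}{2}\;\geq\;\prod_{i=1}^{n}\sqrt{\lambda_i}\;=\;\sqrt{\prod_{i=1}^{n}\lambda_i}\;=\;\sqrt{\det\mathbf{\lambda}}\;=\;1,
\end{equation*}
where the last equality uses the normalization $\det\mathbf{\lambda}=1$.

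It then remains to upgrade this weak inequality to a strict one, which is where the hypothesis $\mathbf{\lambda}\neq\mathbf{1}$ enters. Since the coordinates are not all equal to $1$, there is at least one index $j$ with $\lambda_j\neq 1$, and for that index the AM--GM step is strict: $\tfrac{\lambda_j+1}{2}>\sqrt{\lambda_j}$. Because every factor is positive, a single strict factor forces the whole product to be strictly larger than $\prod_i\sqrt{\lambda_i}=1$, giving $\det\left(\tfrac{\mathbf{\lambda}+\mathbf{1}}{2}\right)>1$ as desired. There is no real obstacle here: the only point requiring slight care is tracking the equality case of AM--GM so as to justify strictness, and confirming that $\det\mathbf{\lambda}=1$ collapses $\prod_i\sqrt{\lambda_i}$ to exactly $1$.
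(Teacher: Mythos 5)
Your proof is correct and takes essentially the same approach as the paper: the paper's key step $\left( \lambda_i + 1 \right)^2 \geq 4\lambda_i$, derived from $\left( \lambda_i - 1 \right)^2 \geq 0$, is precisely the two-term AM--GM inequality you invoke, and both arguments conclude by multiplying over $i$, using $\det \mathbf{\lambda} = 1$, and obtaining strictness from some index with $\lambda_j \neq 1$.
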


\begin{proof}
We have $\left( \lambda _{i}-1\right) ^{2}\geq 0$ and therefore $\left(
\lambda _{i}+1\right) ^{2}\geq 4\lambda _{i}$.With equality only when $%
\lambda _{i}=1.$ Taking the product, we get $\det \left( \mathbf{\lambda }+%
\mathbf{1}\right) ^{2}>4^{n}\det \mathbf{\lambda }$ which is the same as $%
\det \left( \mathbf{\lambda }+\mathbf{1}\right) >2^{n}\sqrt{\det \mathbf{%
\lambda }}=2^{n}.$ This proves the lemma.
\end{proof}

\begin{lemma}
\label{Lema para el afin}Let $c,x$ be complex numbers and $\lambda $ a real
number. Then%
\begin{equation*}
\lambda \left\vert x\right\vert ^{2}+\left\vert x-c\right\vert ^{2}=\left(
\lambda +1\right) \left\vert x-\frac{c}{\left( \lambda +1\right) }%
\right\vert ^{2}+\left( \frac{\lambda }{\lambda +1}\right) \left\vert
c\right\vert ^{2}
\end{equation*}
\end{lemma}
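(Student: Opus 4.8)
The plan is to read the claimed equality as the complex analogue of \emph{completing the square} and to verify it by a direct expansion, replacing every modulus by the product of the quantity with its conjugate via $\left\vert z\right\vert ^{2}=z\overline{z}$. Since both sides then become polynomials in $x,\overline{x},c,\overline{c}$ with real coefficients, it suffices to expand each side and match the terms of the same type (the $\left\vert x\right\vert ^{2}$ term, the two cross terms, and the $\left\vert c\right\vert ^{2}$ term).

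First I would expand the left-hand side. Writing $\left\vert x-c\right\vert ^{2}=(x-c)(\overline{x}-\overline{c})=\left\vert x\right\vert ^{2}-x\overline{c}-\overline{x}c+\left\vert c\right\vert ^{2}$, the left side becomes $(\lambda +1)\left\vert x\right\vert ^{2}-x\overline{c}-\overline{x}c+\left\vert c\right\vert ^{2}$. Then I would expand the first term on the right the same way: $(\lambda +1)\left\vert x-\tfrac{c}{\lambda +1}\right\vert ^{2}=(\lambda +1)\left\vert x\right\vert ^{2}-x\overline{c}-\overline{x}c+\tfrac{1}{\lambda +1}\left\vert c\right\vert ^{2}$, where the two cross terms lose the factor $\lambda +1$ because each already carries a $\tfrac{1}{\lambda +1}$, while the last term retains a single factor $\tfrac{1}{\lambda +1}$.

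Finally I would compare the two expressions. The quadratic part $(\lambda +1)\left\vert x\right\vert ^{2}$ and the cross terms $-x\overline{c}-\overline{x}c$ agree immediately, so the only thing left to verify is that the $\left\vert c\right\vert ^{2}$ contributions coincide: on the right I add $\tfrac{1}{\lambda +1}\left\vert c\right\vert ^{2}$ to $\tfrac{\lambda }{\lambda +1}\left\vert c\right\vert ^{2}$, and since $\tfrac{1+\lambda }{\lambda +1}=1$ this collapses to $\left\vert c\right\vert ^{2}$, matching the left side.

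There is no genuine obstacle here; the identity is a routine computation. The only place demanding a little care is the bookkeeping of the $\left\vert c\right\vert ^{2}$ coefficients and keeping the two cross terms $x\overline{c}$ and $\overline{x}c$ correctly paired, since they are genuinely complex rather than real and must not be collapsed prematurely. I would also note that the manipulation requires $\lambda \neq -1$ so that the division by $\lambda +1$ is legitimate; this is automatic in every use of the lemma, where $\lambda $ arises as a positive eigenvalue parameter and hence $\lambda +1>0$.
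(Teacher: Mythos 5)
Your proposal is correct and is essentially the paper's own argument: both verify the identity by writing $\left\vert z\right\vert^{2}=z\overline{z}$ and expanding, the only difference being that you expand both sides and match coefficients while the paper completes the square on the left-hand side to arrive at the right. Your observation that $\lambda\neq -1$ is needed for the statement to make sense is a worthwhile point the paper leaves implicit (and is indeed harmless, since in every application $\lambda>0$).
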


\begin{proof}
We have%
\begin{equation*}
\begin{array}{c}
\lambda \left\vert x\right\vert ^{2}+\left\vert x-c\right\vert ^{2}=\lambda
\left\vert x\right\vert ^{2}+\left( x-c\right) \overline{\left( x-c\right) }
\\ 
=\left( \lambda +1\right) \left\vert x\right\vert ^{2}-\left( x\overline{c}+c%
\overline{x}\right) +\left\vert c\right\vert ^{2}%
\end{array}%
\end{equation*}%
and completing the square we obtain%
\begin{equation*}
\begin{array}{l}
=\left( \lambda +1\right) \left( \left\vert x\right\vert ^{2}-\dfrac{\left( x%
\overline{c}+c\overline{x}\right) }{\left( \lambda +1\right) }+\left\vert 
\dfrac{c}{\left( \lambda +1\right) }\right\vert ^{2}-\left\vert \dfrac{c}{%
\left( \lambda +1\right) }\right\vert ^{2}\right) +\left\vert c\right\vert
^{2} \\ 
=\left( \lambda +1\right) \left\vert x-\dfrac{c}{\left( \lambda +1\right) }%
\right\vert ^{2}+\left( \dfrac{\lambda }{\lambda +1}\right) \left\vert
c\right\vert ^{2}.%
\end{array}%
\end{equation*}
\end{proof}

\end{document}